\documentclass[reqno,11pt]{amsart}

\usepackage{amsmath,amssymb}
\usepackage{amsthm}
\usepackage{aliascnt}
\usepackage{mathtools}
\usepackage{bm}
\usepackage{cleveref}

\numberwithin{equation}{section}

\newtheorem{thm}{Theorem}[section]

\newaliascnt{prop}{thm}
\newtheorem{prop}[prop]{Proposition}
\aliascntresetthe{prop}

\newaliascnt{lem}{thm}
\newtheorem{lem}[lem]{Lemma}
\aliascntresetthe{lem}

\newaliascnt{cor}{thm}

\aliascntresetthe{cor}

\theoremstyle{definition}
\newaliascnt{dfn}{thm}
\newtheorem{dfn}[dfn]{Definition}
\aliascntresetthe{dfn}

\newaliascnt{ex}{thm}

\aliascntresetthe{ex}

\newaliascnt{rem}{thm}
\newtheorem{rem}[rem]{Remark}
\aliascntresetthe{rem}

\crefname{thm}{Theorem}{Theorems}
\crefname{prop}{Proposition}{Propositions}
\crefname{lem}{Lemma}{Lemmas}
\crefname{cor}{Corollary}{Corollaries}
\crefname{dfn}{Definition}{Definitions}
\crefname{ex}{Example}{Examples}
\crefname{rem}{Remark}{Remarks}

\Crefname{thm}{Theorem}{Theorems}
\Crefname{prop}{Proposition}{Propositions}
\Crefname{lem}{Lemma}{Lemmas}
\Crefname{cor}{Corollary}{Corollaries}
\Crefname{dfn}{Definition}{Definitions}
\Crefname{ex}{Example}{Examples}
\Crefname{rem}{Remark}{Remarks}

\begin{document}

\title[contact discontinuities for 2-D compressible Euler system]{Non-uniqueness of admissible weak solutions to the two-dimensional barotropic compressible Euler system with contact discontinuities} 
\author{Kotaro Horimoto}
\address{Graduate School of Science, Tohoku University, Aoba, Sendai 980-8578, Japan}
\email{horimoto.kotaro.p6@dc.tohoku.ac.jp}

\date{\today}

\maketitle

\begin{abstract}
This paper is concerned with the Riemann problem for the two-dimensional barotropic compressible Euler system with a general strictly increasing pressure law. 
By means of convex integration, the existence of infinitely many admissible weak solutions is established for certain Riemann initial data for which the corresponding one-dimensional self-similar solution consists solely of a contact discontinuity.     
\end{abstract}

\section{Introduction}

The compressible Euler system constitutes a fundamental model for the motion of inviscid compressible fluids and plays a central role in the theory of hyperbolic conservation laws. 
A basic question in this theory concerns the well-posedness of solutions, in particular their existence and uniqueness. 
In this paper, we investigate this question for the barotropic compressible Euler system in the two-dimensional whole space. 
More precisely, we consider the following initial-value problem: 
\begin{align} 
    \partial_t \rho + \mathrm{div} \, \bm{m} &= 0 && \text{in} \;(0 ,\infty) \times \mathbb{R}^2, \label{eq:Euler mass} \\
    \partial_t \bm{m} + \mathrm{div} \, \frac{\bm{m} \otimes \bm{m}}{\rho}  + \nabla \, p(\rho) &= \bm{0} && \text{in} \;(0 ,\infty) \times \mathbb{R}^2, \label{eq:Euler momentum} \\
    \rho(0,\cdot) &= \rho_\mathrm{init} && \text{in} \;\mathbb{R}^2, \label{eq:Euler init1} \\
    \bm{m}(0,\cdot) &= \bm{m}_\mathrm{init} && \text{in} \;\mathbb{R}^2. \label{eq:Euler init2} 
\end{align}
The unknowns are the density $\rho = \rho(t,\bm{x}) > 0$ and the momentum $\bm{m} = \bm{m}(t,\bm{x}) \in \mathbb{R}^2$.
The first equation represents the conservation of mass, and the second one expresses the conservation of momentum. 
We prescribe initial data $\rho_\mathrm{init}\in L^\infty(\mathbb{R}^2;\mathbb{R}^+)$ and $\bm{m}_\mathrm{init}\in L^\infty(\mathbb{R}^2;\mathbb{R}^2)$, 
where we denote $\mathbb{R}^+=(0,\infty)$. 
The pressure $p$ is a given function of $\rho$, and we assume that
\begin{equation} \label{eq:p-condition}
  p \in C^1(\mathbb{R}^+;\mathbb{R}^+) \quad \text{and} \quad p'(\rho)>0 \;\; \text{for} \;\; \rho\in\mathbb{R}^+.  
\end{equation}
In particular, the commonly used polytropic pressure law $p(\rho) = \rho^\gamma$ with $\gamma \geq 1$ satisfies \eqref{eq:p-condition}.  
Hence, the present setting covers many of the pressure laws considered in the literature, see e.g.~\cite{Ch-De-Kr2015, Ch-Kr2014, Kl-Ma2018}.
In contrast to many previous works, which often focus on specific pressure laws such as the polytropic case, we do not impose any additional structural assumptions on $p$ beyond $C^1$ regularity and monotonicity.

We next introduce the notion of a weak solution to the system \eqref{eq:Euler mass}--\eqref{eq:Euler init2}. 
Since $p'(\rho)>0$ for $\rho>0$, the system is \emph{hyperbolic}. 
As is already observed in simple examples such as Burgers' equation, see Dafermos~\cite[Section~4.2]{Dafermos}, classical solutions to hyperbolic conservation laws do not exist globally in time, even for smooth initial data. 
We are therefore led to consider \emph{weak solutions}.

\begin{dfn}[Weak solutions]  \label{weak sol.}
A pair $(\rho , \bm{m})\in L^\infty((0,\infty)\times\mathbb{R}^2 ; \mathbb{R}^+\times \mathbb{R}^2)$ is called a \emph{weak solution} to the initial-value problem \eqref{eq:Euler mass}--\eqref{eq:Euler init2} 
if the following identities hold for all test functions $(\phi,\boldsymbol{\varphi}) \in C_c^\infty([0,\infty) \times \mathbb{R}^2; \mathbb{R}\times\mathbb{R}^2)$:
\begin{align*}
\int_0 ^ \infty \int_ {\mathbb{R}^2} \left( \rho \partial_t \phi + \bm{m} \cdot \nabla \phi \right) \; \mathrm{d}\bm{x}\,\mathrm{d}t + \int _{\mathbb{R}^2} \rho _\mathrm{init} \phi (0,\cdot) \; \mathrm{d}\bm{x} &= 0, \\
\int_0 ^ \infty \int_ {\mathbb{R}^2} \left( \bm{m} \cdot \partial_t \boldsymbol{\varphi} + \frac{\bm{m} \otimes \bm{m}}{\rho} : \nabla \boldsymbol{\varphi} + p(\rho)\,\mathrm{div}\, \boldsymbol{\varphi} \right) \: \mathrm{d}\bm{x} \, \mathrm{d}t \quad& \\
+ \int _{\mathbb{R}^2} \bm{m} _{\mathrm{init}}  \cdot \boldsymbol{\varphi}(0,\cdot) \; \mathrm{d}\bm{x} &= 0. 
\end{align*}
\end{dfn}

As is well known from the example of the Burgers' equation~\cite[Section~4.4]{Dafermos}, weak solutions are in general not unique. 
To overcome the lack of uniqueness, one imposes an additional admissibility (entropy) condition that rules out physically irrelevant weak solutions.
\begin{dfn}[Admissible weak solutions]\label{def:admissible sol.}
A weak solution $(\rho,\bm{m})$ is called \emph{admissible} if  
\begin{align}
\int_0 ^ \infty \int_ {\mathbb{R}^2} \left(\frac{\lvert \bm{m} \rvert ^2}{2\rho} + P(\rho)\right) \partial_t \psi 
+ \left(\frac{\lvert \bm{m} \rvert^2}{2\rho} + P(\rho) + p(\rho)\right)\frac{\bm{m}}{\rho} \cdot \nabla \psi 
\, \mathrm{d}\bm{x} \, \mathrm{d}t  \notag \\  
+ \int _{\mathbb{R}^2} \left(\frac{\lvert \bm{m} _\mathrm{init} \rvert^2}{2\rho _\mathrm{init}} + P(\rho _\mathrm{init})\right) 
\psi(0, \cdot) \; \mathrm{d}\bm{x} \,\geq\, 0 \label{eq:entropy}
\end{align}
for all non-negative test functions $\psi \in C_c ^\infty ([0,\infty)\times \mathbb{R}^2;[0,\infty))$, 
where the pressure potential $P$ is defined by 
\begin{equation}\label{eq:potential}
    P(\rho)=\rho \int_{\rho^*}^\rho \frac{p(r)}{r^2} \, \mathrm{d}r
\end{equation}
for $\rho > 0$, and the constant $\rho^* > 0$ is arbitrary.
\end{dfn}

In one space dimension, the admissibility condition ensures the uniqueness of solutions. 
More precisely, for sufficiently small initial data, Glimm~\cite{Glimm1965} constructed an admissible weak solution,  
and Bressan et al.~\cite{Bressan2000} proved that this solution is unique within the class of BV functions.

In contrast, in higher space dimensions, uniqueness of admissible weak solutions generally fails.
In seminal works, De~Lellis and Sz\'ekelyhidi~\cite{De-Sze2009, De-Sze2010} developed a \emph{convex integration} framework for the incompressible Euler system and constructed infinitely many admissible weak solutions for suitable initial data. 
Moreover, in~\cite{De-Sze2010} an analogous non-uniqueness result for the compressible system is proved. 
Subsequently, the convex integration approach has been further developed and applied extensively to both the compressible and incompressible Euler systems, 
see e.g.~\cite{Ch2014, Ch-Kr-Ma-Sch2021, Fei2014, Sze2011}.

This paper addresses ourselves to Riemann initial data of the form
\begin{equation}\label{eq:Riemann init}
(\rho _\mathrm{init} , \bm{m}_\mathrm{init})(\bm{x})= 
  \begin{dcases*} 
   (\rho_+ \,,\, \bm{m}_+) & if $y>0$, \\
   (\rho_- \,,\, \bm{m}_-) & if $y<0$,
  \end{dcases*}
\end{equation}
where $\rho_\pm>0$ and $\bm{m}_\pm \in \mathbb{R}^2$ are fixed constants.
We denote the space variables by $\bm{x}=(x,y) \in \mathbb{R}^2$. 
The system \eqref{eq:Euler mass}--\eqref{eq:Euler init2} equipped with \eqref{eq:Riemann init} is referred to as the Riemann problem. 
Such data can be viewed as one-dimensional Riemann data trivially extended to two space dimensions. 

We consider existence and uniqueness of admissible weak solutions to the Riemann problem. 
Admissible weak solutions can be obtained by considering solutions that are independent of the $x$-variable. 
In one space dimension, the Riemann problem admits self-similar admissible weak solutions consisting of a finite number of constant states separated by shocks, rarefaction waves and contact discontinuities, see~\cite[Chapters~7--9]{Dafermos}.
Such solutions are commonly referred to as \emph{one-dimensional self-similar} or simply \emph{self-similar} solutions. 
These solutions can be viewed as admissible weak solutions to the original two-dimensional problem when considered as functions on $(t,\bm{x}) \in [0,\infty)\times\mathbb{R}^2$.

Concerning uniqueness, the situation is more delicate. 
A first result on non-uniqueness was obtained by Chiodaroli--De~Lellis--Kreml~\cite{Ch-De-Kr2015} in the case of the pressure law $p(\rho)=\rho^2$. 
It was shown that for certain Riemann data with a shock wave, the Riemann problem admits infinitely many admissible weak solutions. 
In contrast, Chen--Chen~\cite{Ch-Ch2007} and Feireisl--Kreml~\cite{Fei-Kr2015} proved that Riemann data whose one-dimensional self-similar solution consists solely of rarefaction waves give rise to a unique admissible weak solution. 
The result of~\cite{Fei-Kr2015} holds for any convex and strictly increasing pressure law $p \in C^1$.
Subsequent works have been devoted to the classification of Riemann data with respect to uniqueness or non-uniqueness of admissible weak solutions, 
see e.g.~\cite{Br-Ch-Kr2018, Br-Kr-Ma2021, Ch-Kr2014, Ch-Kr2018, Kl-Ma2018energy, Kl-Ma2018, Ma2021}. 
In summary, these works imply that, at least for the polytropic pressure law $p(\rho)=\rho^\gamma$ with $\gamma\ge 1$,
non-uniqueness occurs whenever the self-similar solution contains a shock,
whereas a self-similar solution consisting only of rarefaction waves is unique in the class of admissible weak solutions.

In contrast, the situation is far less understood when the self-similar solution consists solely of a contact discontinuity. 
In this regime, the uniqueness question for admissible weak solutions remains largely open. 
In the present paper, we address this problem for a subclass of these data and establish the existence of infinitely many admissible weak solutions. 

More precisely, the self-similar solution to the initial-value problem \eqref{eq:Euler mass}--\eqref{eq:Euler init2} with \eqref{eq:Riemann init} consists of a single contact discontinuity, 
if the Riemann data have a jump only in the component of the momentum tangential to the discontinuity, while both the density and the normal component of the momentum are continuous across the interface, i.e., 
\begin{equation*}
\rho_+ = \rho_-, \quad \lbrack \bm{m}_+ \rbrack_1 \neq \lbrack \bm{m}_- \rbrack_1, \quad \lbrack \bm{m}_+ \rbrack_2 = \lbrack \bm{m}_- \rbrack_2.
\end{equation*} 
Here, $\lbrack \bm{m} \rbrack_i$ denotes the $i$-th component of $\bm{m}\in\mathbb{R}^2$. 
In addition, we impose the following symmetry condition:
\begin{equation} \label{eq:symm-condi}
\lbrack \bm{m}_+ \rbrack_1 = - \lbrack \bm{m}_- \rbrack_1 .
\end{equation}
This symmetry condition is motivated from the vortex sheet initial data considered by Sz\'ekelyhidi~\cite{Sze2011} in the \emph{incompressible} Euler system, where the initial velocity field is given by 
\begin{equation*}
\bm{v}_\mathrm{init}(\bm{x})= 
  \begin{dcases*} 
   (1,0) & if $y>0$, \\
   (-1,0) & if $y<0$.
  \end{dcases*}
\end{equation*}
Using convex integration techniques, Sz\'ekelyhidi~\cite{Sze2011} proved the existence of infinitely many admissible weak solutions starting from this initial datum. 
Contact discontinuities in the compressible Euler system can be viewed as the compressible analogue of vortex sheets in the incompressible setting.
It is therefore natural to ask whether a similar non-uniqueness phenomenon can occur in the compressible case.

Recently, Krupa--Sz\'ekelyhidi~\cite{Krupa-Sze2025} obtained a related non-uniqueness result for contact discontinuity Riemann data under the symmetry condition \eqref{eq:symm-condi}, by treating the pressure law as an additional degree of freedom. 
More precisely, in~\cite{Krupa-Sze2025}, a smooth pressure law $p$ with $p'>0$ is constructed 
such that the system \eqref{eq:Euler mass}--\eqref{eq:Euler init2} with this pressure law admits infinitely many admissible weak solutions for initial data of the form $\rho_\pm = \rho$, $\bm{m}_\pm = (\pm \rho,0)$, where $\rho > 0$ is a suitable constant.
However, it has remained unclear whether such non-uniqueness can occur for a prescribed physically relevant pressure law, such as the polytropic law $p(\rho)=\rho^\gamma$.
In the present paper, by contrast, we establish non-uniqueness for arbitrary strictly increasing pressure laws $p \in C^1$. 
In particular, this shows that the presence of shocks is not necessary for non-uniqueness of admissible weak solutions to the compressible Euler system.
More precisely, our main result reads,

\begin{thm} \label{thm:main}
  Assume that $p$ satisfies \eqref{eq:p-condition}. 
  Let $\rho_0>0$ and $u_0\neq 0$ be arbitrarily given. 
  Set $\rho_+ = \rho_- = \rho_0$, $\bm{m}_+=(\rho_0u_0 \,,\, 0)$, and $\bm{m}_-=(-\rho_0u_0 \,,\, 0)$. 
  Then there exist infinitely many admissible weak solutions to \eqref{eq:Euler mass}--\eqref{eq:Euler init2} with \eqref{eq:Riemann init}.
\end{thm}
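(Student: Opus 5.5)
We follow the strategy of Chiodaroli--De~Lellis--Kreml~\cite{Ch-De-Kr2015}: look for solutions that are independent of $x$ and piecewise constant in the self-similar variable $y/t$, glued to \emph{fan subsolutions} on which convex integration is carried out. Concretely, we fix speeds $\nu_-<0<\nu_+$ and split the half-space $(0,\infty)\times\mathbb{R}^2$ into three regions:
\[
P_- = \{y<\nu_- t\}, \qquad P_1 = \{\nu_- t<y<\nu_+ t\}, \qquad P_+=\{y>\nu_+ t\}.
\]
On $P_\pm$ the state is the Riemann datum $(\rho_0,\pm\rho_0u_0,0)$. On $P_1$ we take a constant density $\rho_1$, a constant mean velocity $\bar{\bm v}_1$, a symmetric traceless matrix $U_1$ and a constant $C_1>0$. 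By the symmetry \eqref{eq:symm-condi} it is natural to take $\nu_-=-\nu_+$ and $\bar{\bm v}_1=(0,0)$.

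\textbf{Step 1: the subsolution criterion.} We first state the convex-integration criterion, which is a compressible version of the De~Lellis--Sz\'ekelyhidi theorem. Suppose $(\rho_1,\bar{\bm v}_1,U_1,C_1)$ satisfies:
\begin{itemize}
\item the strict subsolution inequality $\lambda_{\max}\bigl(\bar{\bm v}_1\otimes\bar{\bm v}_1-U_1\bigr)<C_1/2$;
\item the Rankine--Hugoniot relations across $y=\nu_\pm t$ for the linear relaxed system $\partial_t\rho+\mathrm{div}(\rho\bm v)=0$, $\partial_t(\rho\bm v)+\mathrm{div}(\rho U)+\nabla\bigl(p(\rho)+\rho C/2\bigr)=0$;
\item the admissibility inequalities across each interface, with the energy density $|\bm m|^2/(2\rho)$ replaced by $\rho C_1/2$ in $P_1$.
\end{itemize}
Then there are infinitely many admissible weak solutions. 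The argument is the standard one: since $\rho=\rho_1$ is constant in $P_1$, the problem there reduces to the incompressible result with pressure shifted by a constant. It yields infinitely many $\bm v\in L^\infty$ on $P_1$ with $\bm v\otimes\bm v-\tfrac{|\bm v|^2}{2}I=U_1$ and $|\bm v|^2=C_1$ a.e., and with the weak boundary data equal to $(\bar{\bm v}_1,U_1)$. We take this from the framework recalled above (\cite{De-Sze2010,Ch-De-Kr2015}) without reproving it.

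\textbf{Step 2: write the algebraic system.} We use the ansatz $\bar{\bm v}_1=0$ and $U_1=\begin{pmatrix}\alpha&\beta\\ \beta&-\alpha\end{pmatrix}$.
\begin{itemize}
\item Mass across $y=\nu_+t$ gives $\nu_+(\rho_0-\rho_1)=0$, so $\rho_1=\rho_0$. The density then stays continuous, which is what makes the construction work for a completely general $p$: $p$ only enters through constants that cancel.
\item The first momentum component gives $\nu_+\rho_0u_0=-\rho_0\beta$, hence $\beta=-\nu_+u_0$. Symmetry is consistent at $\nu_-=-\nu_+$, since the jump there is $-\rho_0u_0$.
\item The second momentum component gives $0=\rho_0(-\alpha)+\rho_0C_1/2$, taking pressures $p(\rho_0)$ on both sides; hence $\alpha=C_1/2$.
\item The subsolution condition requires the eigenvalues $\pm\sqrt{\alpha^2+\beta^2}$ of $-U_1$ to lie below $C_1/2$. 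This fails, since $\sqrt{\alpha^2+\beta^2}\ge\alpha=C_1/2$.
\end{itemize}
So the fully symmetric ansatz is too rigid. We relax it by letting $\rho_1\neq\rho_0$, which is allowed only if a normal velocity is present; alternatively we allow two intermediate regions $P_1,P_2$ with $\nu_-<0<\nu_+$, densities $\rho_1,\rho_2$ and nonzero $[\bar{\bm v}_i]_2$. With the odd symmetry $y\mapsto-y$, $m_1\mapsto -m_1$, the unknowns become $\rho_1$, $v_2$, $\nu_+$, $\alpha$, $\beta$, $C_1$, with a middle state on the line $y=0$. Alternatively we keep a single fan region but drop exact symmetry by taking $\bar{\bm v}_1=(0,w)$ with $w\neq0$.

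\textbf{Step 3: solve for small perturbations — the main obstacle.} In the degenerate configuration above, the subsolution inequality and the energy inequality both hold with equality. We therefore linearize around it: take $\rho_1=\rho_0+\varepsilon$ and expand the Rankine--Hugoniot relations. The differences $p(\rho_1)-p(\rho_0)\approx p'(\rho_0)\varepsilon$ and $P$ enter only through first-order Taylor expansions, which is where $p\in C^1$ and $p'>0$ are used. The aim is to choose the sign of $\varepsilon$ and the speeds so that
\begin{itemize}
\item the gap $C_1/2-\lambda_{\max}$ becomes positive at first order;
\item the entropy production at each interface, of the form $\nu[E]-[(E+p)v_2]$, stays nonnegative.
\end{itemize}
This is the step I expect to be hardest: both quantities are first order in $\varepsilon$, and we must show that the linear constraints are compatible. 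If first order is degenerate, we go to second order, using the positive term from $u_0^2$, which reflects the kinetic energy that the mixing zone can dissipate. Once an open set of parameters is found, any choice in it gives a fan subsolution, and Step 1 yields infinitely many admissible weak solutions, proving \Cref{thm:main}.
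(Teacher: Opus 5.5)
There is a genuine gap. Your Step~1 (a Chiodaroli--De~Lellis--Kreml type criterion) is sound, and so is your computation that the symmetric three-region ansatz fails. But the real content of the theorem, beyond the black-box convex integration, is the construction of a strict subsolution. Step~3 leaves that undone, and every geometry you propose to perturb is provably infeasible.

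The obstruction is not the symmetry but the Rankine--Hugoniot relations themselves.

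\textbf{One intermediate region.} Take speeds $\mu_0<\mu_1$. Mass conservation across the two interfaces gives $[\bm{m}_1]_2=\mu_0(\rho_1-\rho_0)=\mu_1(\rho_1-\rho_0)$. So $\rho_1=\rho_0$ and $[\bar{\bm{v}}_1]_2=0$ are forced for any speeds, and neither ``$\rho_1\neq\rho_0$'' nor ``$\bar{\bm{v}}_1=(0,w)$, $w\neq0$'' is available. The normal-momentum relation then forces $\alpha=C_1/2$, as you found. This kills the subsolution condition whatever $[\bar{\bm{v}}_1]_1$ and $\beta$ are, since $\lambda_{\max}(\bar{\bm{v}}_1\otimes\bar{\bm{v}}_1-U_1)\ge[\bar{\bm{v}}_1]_2^2-[U_1]_{22}=\alpha$.

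\textbf{Two intermediate regions.} These fail too, with or without symmetry. Take speeds $\mu_0<\mu_1<\mu_2$ and set $x=\rho_1-\rho_0$, $z=\rho_2-\rho_0$. The mass relations give $[\bm{m}_1]_2=\mu_0x$, $[\bm{m}_2]_2=\mu_2z$ and $(\mu_1-\mu_0)x=(\mu_1-\mu_2)z$. Eliminating the two normal stresses from the three normal-momentum relations gives $\mu_0(\mu_1-\mu_0)x=\mu_2(\mu_1-\mu_2)z$. Hence $(\mu_0-\mu_2)(\mu_1-\mu_2)z=0$, so $x=z=0$ and both normal momenta vanish. The same degeneracy $\alpha_i=C_i/2$ then occurs in both regions. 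With your odd symmetry and middle interface $y=0$ it is immediate: mass at $y=0$ gives $[\bm{m}_1]_2=[\bm{m}_2]_2$, while symmetry gives $[\bm{m}_1]_2=-[\bm{m}_2]_2$.

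So the degenerate configuration of Step~3 cannot be perturbed at all: the equality constraints pin it exactly, and no first- or second-order expansion will open the gap $C_1/2-\lambda_{\max}$. Your remark that continuity of the density is what makes a general $p$ work is backwards. Continuity of the density is exactly what destroys the subsolution condition.

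\textbf{The missing idea} is to use at least three intermediate states. The paper takes a five-region fan with speeds $-b<-a<a<b$ and $(\rho_1,\bm{m}_1)=(\rho_3,-\bm{m}_3)$. The Rankine--Hugoniot relations then give
\[
[\bm{m}_1]_2=-b(\rho_1-\rho_0),\qquad \bm{m}_2=\bm{0},\qquad \rho_2=\rho_1-\tfrac{b}{a}(\rho_1-\rho_0).
\]
These are two compressed outer states moving away from $y=0$ around a rarefied central state at rest; $\rho_2=\rho_0-\varepsilon$ when $\rho_1=\rho_0+\frac{a\varepsilon}{b-a}$.
\begin{itemize}
\item The off-diagonal term in the outer determinant condition is removed by $[\bm{m}_1]_1=-\rho_1u_0$.
\item The remaining parameters are fixed explicitly: $b^2=p'(\rho_0)+1$, $2q_2=p(\rho_0)+p(\rho_0-\varepsilon)+\frac12\rho_0u_0^2$, and $2q_1=\rho_1u_0^2+(\rho_1-\rho_0)(b^2+1)+p(\rho_0)+p(\rho_1)$.
\item Taking $\varepsilon$ small and then $a$ small verifies all subsolution and entropy conditions.
\item $p\in C^1$ enters only through difference quotients at $\rho_0$, and $p'>0$ only through $p(\rho_0-\varepsilon)<p(\rho_0)$ in the central region.
\end{itemize}

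The paper also works in Markfelder's framework, where the energy flux is an additional unknown, so the four interface entropy inequalities collapse to a single scalar inequality. With your prescribed flux $(E+p)v_2$ you would have to verify each interface inequality separately for whatever states you end up with.
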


\begin{rem} 
  The admissible weak solutions constructed in \Cref{thm:main} are genuinely two-dimensional, in the sense that they depend on both spatial variables $x$ and $y$. 
  Indeed, Theorem~1.2 in~\cite{Krupa-Sze2025} yields uniqueness of one-dimensional admissible weak solutions in the class $L^\infty$ for contact discontinuity Riemann data. 
  Consequently, the infinitely many admissible weak solutions obtained here cannot be one-dimensional and must therefore exhibit genuinely two-dimensional oscillations. 
\end{rem}

Our proof is based on the refined convex integration method developed by Markfelder~\cite{Mark2024}. 
This paper is organized as follows. 
In \Cref{sec:Preliminaries}, we briefly summarize the results of~\cite{Mark2024}, which form the basis of our approach. 
In \Cref{sec:Proof}, we provide the proof of \Cref{thm:main}.

\section{Preliminaries} \label{sec:Preliminaries}
\subsection{Definitions}
In this section, we recall the convex integration approach developed in~\cite{Mark2024} that will be used in the subsequent analysis. 
In the original approaches (see e.g.~\cite{Ch-De-Kr2015, Ma2021}), the existence of infinitely many admissible weak solutions is reduced to the construction of a suitable subsolution, 
while the entropy inequalities are verified separately from the convex integration framework. 
Therefore, one of main difficulties is to additionally check the entropy inequalities \eqref{eq:entropy}. 
Markfelder~\cite{Mark2024} developed a refined convex integration method that incorporates the entropy inequalities into the framework itself. 
This idea significantly simplifies the analysis of the entropy inequality and, consequently, facilitates the construction of admissible fan subsolutions. 

To formulate this framework precisely, we first introduce the basic notation. 
In what follows, we write
\begin{equation*}
\mathcal{PH} := \mathbb{R}^2\times\mathrm{Sym}_0(2,\mathbb{R})\times\mathbb{R}\times\mathbb{R}^2,
\end{equation*}
where $\mathrm{Sym}_0(2,\mathbb{R})$ denotes the space of symmetric traceless $2\times 2$ real matrices. 
We then recall the notion of a \emph{fan partition}.

\begin{dfn}[Fan partition, see~{\cite[Definition~4.1]{Mark2024}}] \label{def:fan-partition}
  Let $\mu_0 < \mu_1 < \mu_2 < \mu_3$ be real numbers. 
  A \emph{fan partition} of the space-time domain $(0,\infty)\times\mathbb{R}^2$ is a collection of five open sets 
  $\Gamma_-, \Gamma_1, \Gamma_2, \Gamma_3, \Gamma_+$ defined by 
 \begin{align*}
   \Gamma_- &= \{ (t,\bm{x}) \in (0,\infty)\times \mathbb{R}^2 \mid y < \mu_0 t \}, \\
   \Gamma_i &= \{ (t,\bm{x}) \in (0,\infty)\times \mathbb{R}^2 \mid \mu_{i-1}t < y < \mu_i t \} \quad \text{for $i=1,2,3$,}\\
   \Gamma_+ &= \{ (t,\bm{x}) \in (0,\infty)\times \mathbb{R}^2 \mid y > \mu_3 t \}.
 \end{align*}
\end{dfn}

\begin{rem} \label{rem:5-1}
The number of sets $\Gamma_i$ in \Cref{def:fan-partition} is chosen specifically to accommodate the proof of \Cref{thm:main}. 
In the original convex integration frameworks (see e.g.~\cite{Ch-De-Kr2015, Ma2021}), the space-time domain $(0,\infty)\times\mathbb{R}^2$ is decomposed into three regions, $\Gamma_-, \Gamma_1,$ and $\Gamma_+$, 
and convex integration is applied in the single intermediate region $\Gamma_1$ to generate infinitely many solutions. 
For the class of Riemann data considered in \Cref{thm:main}, however, three- or four-region partitions are not sufficient. 
In fact, they do not allow the construction of an admissible fan subsolution satisfying all required constraints. 
This necessity leads us to introduce a finer partition consisting of five regions. 
See \Cref{rem:5-2} for further discussion. 
\end{rem}

Next, we recall the definition of an \emph{admissible fan subsolution}. 

\begin{dfn}[Admissible fan subsolution, see~{\cite[Definition~4.3]{Mark2024}}]\label{def:adf}
 An \emph{admissible fan subsolution} to the initial-value problem for the compressible Euler system \eqref{eq:Euler mass}--\eqref{eq:Euler init2} with \eqref{eq:Riemann init} is a tuple 
 \begin{equation*}
   (\rho, \overline{\bm{m}}, \overline{\mathbb{U}}, \overline{q}, \overline{\bm{F}}) \in L^\infty ((0,\infty) \times \mathbb{R}^2 ; \mathbb{R}^+\times\mathcal{PH})
 \end{equation*}
 of piecewise constant functions which satisfies the following properties: 
 \begin{enumerate}
    \item[(i)] There exist a fan partition $\Gamma_-, \Gamma_1, \Gamma_2, \Gamma_3, \Gamma_+$ of $(0,\infty)\times \mathbb{R}^2$ and constants 
                \[(\rho_i,\bm{m}_i,\mathbb{U}_i,q_i,\bm{F}_i) \in \mathbb{R}^+\times\mathcal{PH} \qquad \text{for $i=1,2,3$} \] 
                such that 
                \begin{equation*}
                  (\rho, \overline{\bm{m}}, \overline{\mathbb{U}}, \overline{q}, \overline{\bm{F}})= 
                  \begin{dcases*} 
                     (\rho_-, \bm{m}_-, \mathbb{U}_-, q_-, \bm{F}_-) & if $(t,\bm{x}) \in \Gamma_-$, \\
                     (\rho_i, \bm{m}_i, \mathbb{U}_i, q_i, \bm{F}_i) & if $(t,\bm{x}) \in \Gamma_i$, \; $i=1,2,3$, \\
                     (\rho_+, \bm{m}_+, \mathbb{U}_+, q_+, \bm{F}_+) & if $(t,\bm{x}) \in \Gamma_+$,
                  \end{dcases*}
               \end{equation*}
               where 
               \begin{align}
                  q_\pm &:= \frac{|\bm{m}_\pm|^2}{2\rho_\pm} + p(\rho_\pm), \label{eq:q_pm} \\
                  \mathbb{U}_\pm &:= \frac{\bm{m}_\pm \otimes \bm{m}_\pm}{\rho_\pm} + (p(\rho_\pm)-q_\pm)\mathbb{I}_2, \label{eq:U_pm} \\
                  \bm{F}_\pm &:= \frac{q_\pm + P(\rho_\pm)}{\rho_\pm}\bm{m}_\pm. \label{eq:F_pm}
               \end{align}
    \item[(ii)] It holds that 
                 \begin{equation*}
                  \lambda_{\rm{max}} \left( \frac{\bm{m}_i \otimes \bm{m}_i}{\rho_i} - \mathbb{U}_i + (p(\rho_i) - q_i)\mathbb{I}_2 \right) < 0 \quad \text{for $i=1,2,3$.}
                 \end{equation*}
    \item[(iii)] The equations 
                  \begin{align*}
                    \int_0 ^ \infty \int_ {\mathbb{R}^2} \left( \rho \partial_t \phi + \overline{\bm{m}} \cdot \nabla \phi \right) \; \mathrm{d}\bm{x}\,\mathrm{d}t + \int _{\mathbb{R}^2} \rho _\mathrm{init} \phi (0,\cdot) \; \mathrm{d}\bm{x} &= 0, \\
                    \int_0 ^ \infty \int_ {\mathbb{R}^2} \left( \overline{\bm{m}} \cdot \partial_t \boldsymbol{\varphi} + \overline{\mathbb{U}} : \nabla \boldsymbol{\varphi} + \overline{q}\,\mathrm{div}\, \boldsymbol{\varphi} \right) \: \mathrm{d}\bm{x} \, \mathrm{d}t + \int _{\mathbb{R}^2} \bm{m} _{\mathrm{init}}  \cdot \boldsymbol{\varphi}(0,\cdot) \; \mathrm{d}\bm{x} &= 0, 
                  \end{align*}
                  and the inequality 
                  \begin{align*} 
                    \int_0 ^ \infty \int_ {\mathbb{R}^2} \left(\overline{q} + P(\rho) - p(\rho)\right) \, \partial_t \psi \, + \, \overline{\bm{F}} \cdot \nabla \psi \;\; \mathrm{d}\bm{x} \, \mathrm{d}t \\ 
                    + \int _{\mathbb{R}^2} \left(\frac{\lvert \bm{m} _\mathrm{init} \rvert^2}{2\rho _\mathrm{init}} + P(\rho _\mathrm{init})\right) \psi(0, \cdot) \; \mathrm{d}\bm{x} \geq 0
                  \end{align*}
                  hold for all test functions $(\phi, \boldsymbol{\varphi}, \psi) \in C_c^\infty([0,\infty) \times \mathbb{R}^2; \mathbb{R}\times\mathbb{R}^2 \times \mathbb{R})$ with $\psi \geq 0$.
 \end{enumerate}
\end{dfn}

\begin{rem}
The notion of admissible fan subsolution introduced in \Cref{def:adf} plays a crucial role in the framework of~\cite{Mark2024}. 
In the previous frameworks as in~\cite{Ch-De-Kr2015, Ma2021}, the energy flux $\overline{\bm{F}}$ is prescribed as $\overline{\bm{F}} = \frac{\overline{q}+P(\rho)}{\rho}\overline{\bm{m}}$. 
In contrast, the formulation in \Cref{def:adf} allows us to treat the flux $\overline{\bm{F}}$ as an unknown. 
This additional flexibility simplifies the treatment of the entropy inequalities.
\end{rem}

\subsection{Sufficient condition for non-uniqueness}
We now describe the convex integration method developed in~\cite{Mark2024}, which asserts that the existence of an admissible fan subsolution implies the existence of infinitely many admissible weak solutions.

\begin{prop}[cf.~{\cite[Proposition~4.5]{Mark2024}}] \label{prop:reduction}
 Assume that $p \in C^1(\mathbb{R}^+;\mathbb{R}^+)$. 
 Let $(\rho_\pm, \, \bm{m}_\pm) \in \mathbb{R}^+ \times \mathbb{R}^2$ be initial states such that there exists an admissible fan subsolution $(\rho, \overline{\bm{m}}, \overline{\mathbb{U}}, \overline{q}, \overline{\bm{F}})$ to the initial-value problem \eqref{eq:Euler mass}--\eqref{eq:Euler init2} with \eqref{eq:Riemann init}. 
 Then there exist infinitely many functions $\bm{m} \in L^\infty((0,\infty)\times\mathbb{R}^2;\mathbb{R}^2)$ such that 
 each pair $(\rho,\bm{m})$ is an admissible weak solution to the initial-value problem \eqref{eq:Euler mass}--\eqref{eq:Euler init2} with \eqref{eq:Riemann init}. 
\end{prop}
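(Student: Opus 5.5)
The plan is to follow the Tartar framework as adapted in~\cite{Mark2024}. We reformulate the Euler system, in each region $\Gamma_i$ where $\rho\equiv\rho_i$ is constant, as a linear system coupled with a pointwise nonlinear constraint. We then perturb the subsolution only inside $\Gamma_1,\Gamma_2,\Gamma_3$ by compactly supported oscillations that solve the linear system. Since the perturbations are compactly supported in the open sets $\Gamma_i$ and solve the homogeneous linear equations there, adding them leaves the weak identities in \Cref{def:adf}(iii) unchanged, including the initial-data terms and the interfaces between regions. The entropy inequality is also untouched if the perturbation of the energy equation is an exact (distributional) solution.

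Concretely, for fixed $\rho_i$ we consider states $z=(\bm{m},\mathbb{U},q,\bm{F})\in\mathcal{PH}$ and the linear system
\[
\mathrm{div}\,\bm{m}=0,\qquad \partial_t\bm{m}+\mathrm{div}\,\mathbb{U}+\nabla q=\bm{0},\qquad \partial_t q+\mathrm{div}\,\bm{F}=0 .
\]
In the last equation $q$ plays the role of the energy up to the constant $P(\rho_i)-p(\rho_i)$. The associated constraint set is
\[
K_i=\Bigl\{z:\ \mathbb{U}=\tfrac{\bm{m}\otimes\bm{m}}{\rho_i}-\tfrac{|\bm{m}|^2}{2\rho_i}\mathbb{I}_2,\ q=\tfrac{|\bm{m}|^2}{2\rho_i}+p(\rho_i),\ \bm{F}=\tfrac{q+P(\rho_i)-p(\rho_i)+p(\rho_i)}{\rho_i}\bm{m}\Bigr\}.
\]
If $z$ solves the linear system a.e.\ in $\Gamma_i$ with values in $K_i$, then $\mathbb{U}+q\mathbb{I}_2=\bm{m}\otimes\bm{m}/\rho_i+p(\rho_i)\mathbb{I}_2$. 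Moreover $q+P-p$ is the physical energy and $\bm{F}$ is its physical flux, so $(\rho_i,\bm{m})$ is an admissible weak solution in $\Gamma_i$. Since the subsolution identities/inequality hold globally, the glued function is an admissible weak solution on all of $(0,\infty)\times\mathbb{R}^2$.

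The key steps, in order, are as follows.
\begin{enumerate}
\item[(a)] Compute the wave cone $\Lambda$ of the linear operator. We need to show that it contains enough directions, namely segments $[z_1,z_2]$ with $z_1,z_2\in K_i$ and $z_2-z_1\in\Lambda$, including the $\bm{F}$-component. Here the divergence-free structure in $(t,\bm{x})$ gives plane-wave solutions along directions with a nontrivial kernel.
\item[(b)] Identify the interior of the $\Lambda$-convex (in practice, convex) hull of $K_i$, restricted to a fixed energy level $q$. We show that a state $(\bm{m},\mathbb{U},q,\bm{F})$ with
\[
\lambda_{\max}\Bigl(\tfrac{\bm{m}\otimes\bm{m}}{\rho_i}-\mathbb{U}+(p(\rho_i)-q)\mathbb{I}_2\Bigr)<0
\]
lies in this interior for every $\bm{F}$. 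The flux is free because, on the $q$-sphere of $K_i$, $\bm{F}$ is a fixed linear function of $\bm{m}$ and the remaining freedom can be absorbed by the localized plane waves. Hence condition~(ii) places the constant states $(\bm{m}_i,\mathbb{U}_i,q_i,\bm{F}_i)$ in the interior of the hull.
\item[(c)] Build localized plane waves via potentials, i.e.\ a constant-coefficient differential operator whose image solves the linear system. Cutting these off yields smooth compactly supported perturbations that stay in the interior of the hull.
\item[(d)] Run the Baire category argument as in~\cite{De-Sze2010}. Let $X_0$ be the set of subsolutions that agree with the given one outside $\bigcup_i\Gamma_i$ and take values in the open hull inside. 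Let $X$ be its weak-$*$ closure, which is a complete metric space on bounded sets. The points of continuity of the map $z\mapsto\int|\bm{m}|^2$ are dense, and at such points $z$ takes values in $K_i$. The perturbation property from (c) shows there are infinitely many such points, in fact a residual set; each yields a distinct $\bm{m}$ with the same $\rho$.
\end{enumerate}

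I expect step (b), together with the perturbation property in (c), to be the main obstacle. One must check that augmenting the classical De~Lellis--Sz\'ekelyhidi system by the flux variable $\bm{F}$ and the extra equation does not shrink the hull interior. The check is that the energy-flux component of the wave cone is rich enough to reach arbitrary $\bm{F}$.

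My first attempt would be to show that for $z$ in the interior, one can find a $\Lambda$-segment through $z$ of length comparable to $\int(\text{distance to }K_i)$ along which the $(\bm{m},\mathbb{U},q)$-part is the classical one, with $\bm{F}$ chosen linearly along it. The degenerate directions would then be handled by an explicit computation of the kernel of the symbol matrix. With this quantitative perturbation estimate, the rest is the standard argument of~\cite{De-Sze2010}, and the $C^1$ regularity of $p$ is only needed to make $K_i$ and $P$ well defined.
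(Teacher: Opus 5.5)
Your linear system and your set $K_i$ are the right objects, but step (b) is false as stated, and the flaw is structural, not technical.

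\textbf{Why step (b) fails.} On the slice $K_i\cap\{q=q_i\}$ you have $\bm{F}=c_i\bm{m}$ with the constant $c_i=(q_i+P(\rho_i))/\rho_i$, as you note yourself. So the convex hull of the slice lies in the affine subspace $\{\bm{F}=c_i\bm{m}\}$, and so does its $\Lambda$-convex hull. This set has empty interior in $\{q=q_i\}$ and contains $(\bm{m}_i,\mathbb{U}_i,q_i,\bm{F}_i)$ only if $\bm{F}_i=c_i\bm{m}_i$. It is true that $\Lambda$ contains the pure-flux directions $(\bm{0},0,0,\bar{\bm{F}})$ (take $\bm{\xi}\perp\bar{\bm{F}}$). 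But oscillating along them cannot undo the relation $\bm{F}=c_i\bm{m}$, which every $K_i$-valued limit obeys.

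Put differently: any solution built with $q\equiv\overline{q}$ has energy $\overline{q}+P(\rho)-p(\rho)$ and flux $\frac{\overline{q}+P(\rho)}{\rho}\bm{m}$. Moreover, $\bm{m}-\overline{\bm{m}}$ is a weak-$*$ limit of spatially divergence-free fields compactly supported in the $\Gamma_i$, where this coefficient is constant. Hence the solution's energy inequality is equivalent to (iii) of \Cref{def:adf} with $\overline{\bm{F}}$ replaced by $\frac{\overline{q}+P(\rho)}{\rho}\overline{\bm{m}}$. That is exactly the old criterion of \cite{Ch-De-Kr2015, Ma2021}, which \Cref{def:adf} is designed to drop.

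This criterion genuinely fails even for the subsolution of \Cref{lem:Subsol}. At $y=bt$ it reads $q_1+P(\rho_1)-p(\rho_1)-\frac12\rho_0u_0^2-P(\rho_0)\le\frac{q_1+P(\rho_1)}{\rho_1}(\rho_1-\rho_0)$. Using $b^2=p'(\rho_0)+1$, the definition of $q_1$, and $P'(\rho_0)=\frac{P(\rho_0)+p(\rho_0)}{\rho_0}$, the left side exceeds the right side by $(\rho_1-\rho_0)(1+o(1))$ as $a\to0$. So a fixed-energy scheme cannot prove the proposition.

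\textbf{The missing idea: let $q$ oscillate.} Work with all of $K_i$. There $\bm{F}=\bigl(\frac{|\bm{m}|^2}{2\rho_i}+p(\rho_i)+P(\rho_i)\bigr)\frac{\bm{m}}{\rho_i}$ is cubic in $\bm{m}$. A state lies in $\mathrm{co}\,K_i$ iff some finitely supported probability measure $\nu$ on $\mathbb{R}^2$ has
\begin{itemize}
\item mean $\bm{m}$,
\item second-moment matrix $\rho_i(\mathbb{U}+(q-p(\rho_i))\mathbb{I}_2)$,
\item third moment $\int|\bm{v}|^2\bm{v}\,\mathrm{d}\nu(\bm{v})=2\rho_i^2\bigl(\bm{F}-\frac{p(\rho_i)+P(\rho_i)}{\rho_i}\bm{m}\bigr)$.
\end{itemize}
Condition (ii) says exactly that $\nu$ has positive definite covariance, and then the third moment is free. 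A mass $\tau R^{-3}$ placed at $R\bm{e}$ shifts the third moment by about $\tau\bm{e}$, while the lower moments move only by $O(R^{-1})$. This moment freedom, not any property of plane waves, is why (ii) imposes nothing on $\bm{F}$.

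\textbf{Your worry in (a) then resolves.} Take $z,z'\in K_i$ with momenta $\bm{w}\neq\bm{w}'$. Pick $\bm{\xi}\neq\bm{0}$ with $\bm{\xi}\perp\bm{w}'-\bm{w}$, and set $s=\bm{w}\cdot\bm{\xi}=\bm{w}'\cdot\bm{\xi}$ and $\xi_t=-s/\rho_i$. The momentum equation holds. The energy equation holds automatically, because the flux components satisfy $\bm{\xi}\cdot(\bm{F}'-\bm{F})=\frac{s}{\rho_i}(q'-q)$.

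\textbf{Remaining changes.} You must still truncate $K_i$ to $|\bm{m}|\le R$ for compactness, with $R$ large relative to the finitely many subsolution states. You must also replace $\int|\bm{m}|^2$ in (d) by a functional whose continuity points satisfy $q=\frac{|\bm{m}|^2}{2\rho_i}+p(\rho_i)$. Zero covariance then forces $\nu$ to be a Dirac mass, so $z\in K_i$. With these changes you recover the architecture of \cite{Mark2024}. The paper itself does not reprove the statement: it invokes \cite[Proposition~4.5]{Mark2024}, noting only that all densities involved stay bounded away from zero.
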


\begin{rem}
We slightly adapt Proposition~4.5 in~\cite{Mark2024} by relaxing the regularity assumption on $p$.
In that work, the pressure law is assumed to belong to $C^1([0,\infty))$. 
Since the construction only involves states with density uniformly bounded away from zero, the argument does not rely on the behavior at $\rho=0$. 
Moreover, since the convex integration method in~\cite{Mark2024} does not require monotonicity of $p$, we only assume that $p \in C^1((0,\infty))$ in \Cref{prop:reduction}, whereas monotonicity is needed in the proof of \Cref{thm:main}. 
\end{rem}

As shown in~\cite{Mark2024}, 
the definition of an admissible fan subsolution (see \Cref{def:adf}) can be reduced to a system of algebraic equations and inequalities. 

\begin{prop}[see~{\cite[Proposition~4.6]{Mark2024}}]\label{prop:large-system}
Let $\rho_\pm > 0$ and $\bm{m}_\pm \in \mathbb{R}^2$ be given. 
Assume that there exist constants $\mu_0, \mu_1,\mu_2,\mu_3 \in \mathbb{R}$ and $(\rho_i,\bm{m}_i,\mathbb{U}_i,q_i,\bm{F}_i) \in \mathbb{R}^+\times\mathcal{PH}$ 
for $i=1,2,3$ which fulfill the following algebraic equations and inequalities\/{\rm{:}}
\begin{itemize}
   \item Order of the speeds\/{\rm{:}}
     \begin{equation}\label{eq:speed}
      \mu_0 < \mu_1 < \mu_2 <\mu_3.
     \end{equation}

   \item Rankine--Hugoniot conditions at each interface\/{\rm{:}}
    \begin{align}
       &\mu_i(\rho_i-\rho_{i+1})=\lbrack \bm{m}_i \rbrack_2 - \lbrack \bm{m}_{i+1} \rbrack_2, \label{eq:RH mass} \\
       &\mu_i(\lbrack \bm{m}_i \rbrack_1 - \lbrack \bm{m}_{i+1} \rbrack_1) = \lbrack \mathbb{U}_i \rbrack_{12} - \lbrack \mathbb{U}_{i+1} \rbrack_{12}, \label{eq:RH m1} \\
       &\mu_i(\lbrack \bm{m}_i \rbrack_2 - \lbrack \bm{m}_{i+1} \rbrack_2) = - \lbrack \mathbb{U}_i \rbrack_{11} + q_i + \lbrack \mathbb{U}_{i+1} \rbrack_{11} - q_{i+1}, \label{eq:RH m2} \\
       &\mu_i(q_i + P(\rho_i) - p(\rho_i) - q_{i+1} - P(\rho_{i+1}) + p(\rho_{i+1})) \leq \lbrack \bm{F}_i \rbrack_2 - \lbrack \bm{F}_{i+1} \rbrack_2 \label{eq:RH en}
      \end{align}
   for $i=0,1,2,3$, where 
    \begin{align*}
      (\rho_0,\bm{m}_0,\mathbb{U}_0,q_0,\bm{F}_0) &= (\rho_-,\bm{m}_-,\mathbb{U}_-,q_-,\bm{F}_-),  \\
      (\rho_4,\bm{m}_4,\mathbb{U}_4,q_4,\bm{F}_4) &= (\rho_+,\bm{m}_+,\mathbb{U}_+,q_+,\bm{F}_+) 
    \end{align*}
   with $q_\pm, \mathbb{U}_\pm, \bm{F}_\pm$ defined in \eqref{eq:q_pm}--\eqref{eq:F_pm}.
   
   \item Subsolution conditions\/{\rm{:}}
   \begin{align}
      &\frac{\lbrack \bm{m}_i \rbrack_1^2 + \lbrack \bm{m}_i \rbrack_2^2}{\rho_i} + 2(p(\rho_i)-q_i) < 0, \label{eq:subsol tr} \\
      &\left(\frac{\lbrack \bm{m}_i \rbrack_1^2}{\rho_i} - \lbrack \mathbb{U}_i \rbrack_{11} + p(\rho_i) - q_i \right) \left(\frac{\lbrack \bm{m}_i \rbrack_2^2}{\rho_i} + \lbrack \mathbb{U}_i \rbrack_{11} + p(\rho_i) - q_i \right) \notag \\
      &\qquad - \left(\frac{\lbrack \bm{m}_i \rbrack_1 \lbrack \bm{m}_i \rbrack_2}{\rho_i} - \lbrack \mathbb{U}_i \rbrack _{12} \right)^2 > 0 \label{eq:subsol det}
   \end{align}
   for $i=1,2,3$.
\end{itemize}
Then $(\rho_i,\bm{m}_i,\mathbb{U}_i,q_i,\bm{F}_i)_{i=1,2,3}$ define an admissible fan subsolution to the initial-value problem \eqref{eq:Euler mass}--\eqref{eq:Euler init2} with \eqref{eq:Riemann init}, 
where the corresponding fan partition is determined by $\mu_i$, $i=0,1,2,3$.
\end{prop}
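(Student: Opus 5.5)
We verify the three requirements of \Cref{def:adf} one by one for the piecewise constant tuple defined on the fan partition $\Gamma_-,\Gamma_1,\Gamma_2,\Gamma_3,\Gamma_+$ determined by $\mu_0<\mu_1<\mu_2<\mu_3$. By \eqref{eq:speed} this partition is well defined. Requirement (i) then holds by construction, since on $\Gamma_\pm$ we put the prescribed states $(\rho_\pm,\bm{m}_\pm,\mathbb{U}_\pm,q_\pm,\bm{F}_\pm)$ from \eqref{eq:q_pm}--\eqref{eq:F_pm}. Boundedness and positivity of $\rho$ are clear, because only finitely many constants occur.

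For (ii), set $M_i:=\frac{\bm{m}_i\otimes\bm{m}_i}{\rho_i}-\mathbb{U}_i+(p(\rho_i)-q_i)\mathbb{I}_2$. This is a real symmetric $2\times 2$ matrix, so both eigenvalues are negative if and only if $\operatorname{tr}M_i<0$ and $\det M_i>0$. Since $\mathbb{U}_i$ is traceless, $[\mathbb{U}_i]_{22}=-[\mathbb{U}_i]_{11}$. Hence $\operatorname{tr}M_i$ is exactly the left-hand side of \eqref{eq:subsol tr}, and expanding $\det M_i$ with $[M_i]_{22}=\frac{[\bm{m}_i]_2^2}{\rho_i}+[\mathbb{U}_i]_{11}+p(\rho_i)-q_i$ gives exactly the left-hand side of \eqref{eq:subsol det}. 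Therefore (ii) is equivalent to \eqref{eq:subsol tr}--\eqref{eq:subsol det}.

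For (iii), fix a test function and split the space-time integral over the five sectors. All quantities are independent of $x$ and constant in each sector, so on each sector the integrand is an exact divergence in $(t,x,y)$ of the form $\partial_t(A\phi)+\partial_x(B\phi)+\partial_y(C\phi)$. By the Gauss theorem, only boundary terms survive. There are two kinds.
\begin{enumerate}
\item[(a)] On $\{t=0\}$, only $\Gamma_-$ (with $y<0$) and $\Gamma_+$ (with $y>0$) have a trace of positive measure, since the intermediate sectors shrink to the line $y=0$. The outer values are $\rho_\pm$ and $\bm{m}_\pm$, and in the energy inequality $q_\pm+P(\rho_\pm)-p(\rho_\pm)=\frac{|\bm{m}_\pm|^2}{2\rho_\pm}+P(\rho_\pm)$. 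So these traces cancel exactly with the initial-data terms involving \eqref{eq:Riemann init}.
\item[(b)] On each interface $\{y=\mu_i t\}$, $i=0,\dots,3$, with normal direction $(\mu_i,0,-1)$, we obtain $\int (\mu_i[A]-[C])\,\phi$, where $[\cdot]$ is the difference of the values in sectors $i$ and $i+1$, up to a positive surface factor.
\end{enumerate}
The $y$-fluxes $C$ are $[\overline{\bm{m}}]_2$ for the mass equation. For the momentum equation they are $[\overline{\mathbb{U}}]_{12}$ in the first component and $[\overline{\mathbb{U}}]_{22}+\overline q=-[\overline{\mathbb{U}}]_{11}+\overline q$ in the second. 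Consequently the interface terms vanish precisely by \eqref{eq:RH mass}, \eqref{eq:RH m1} and \eqref{eq:RH m2}.

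For the energy inequality the same computation gives, on each interface, the term
\[
\int \bigl([\bm{F}_i]_2-[\bm{F}_{i+1}]_2-\mu_i(q_i+P(\rho_i)-p(\rho_i)-q_{i+1}-P(\rho_{i+1})+p(\rho_{i+1}))\bigr)\psi,
\]
up to a positive factor. Since $\psi\ge 0$, this term is nonnegative by \eqref{eq:RH en}, and summing over the interfaces yields the required inequality.

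The main point requiring care is the sign bookkeeping in this last step. I would fix the orientation once: sector $i$ lies below the interface and sector $i+1$ above it. Then the outward normal of sector $i$ is proportional to $(-\mu_i,0,1)$, and I would check that the resulting boundary term has the form $[C]-\mu_i[A]$ with the orientation matching \eqref{eq:RH mass}--\eqref{eq:RH en}. Once this is done, the equalities are insensitive to an overall sign, and the inequality direction follows as stated.
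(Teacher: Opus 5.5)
Your proposal is correct, and it is the standard verification behind this result. The paper gives no proof of its own and only cites Markfelder's Proposition~4.6. Your argument establishes (ii) through $\mathrm{tr}\,M_i<0$ and $\det M_i>0$ for the symmetric matrix $M_i$, using $[\mathbb{U}_i]_{22}=-[\mathbb{U}_i]_{11}$. It establishes (iii) by applying the divergence theorem on each sector, which reduces the weak formulation to the jump relations \eqref{eq:RH mass}--\eqref{eq:RH en} together with initial traces that cancel.

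The sign check you left open comes out as you wrote it for the energy term. The outward normal of sector $i$ is proportional to $(-\mu_i,0,1)$, so the interface contribution is $(C_i-C_{i+1})-\mu_i(A_i-A_{i+1})$. Together with \eqref{eq:RH en} and $\psi\ge 0$, this gives the inequality in the required direction. For completeness, $\mathbb{U}_\pm$ from \eqref{eq:U_pm} is indeed traceless, so the tuple takes values in $\mathbb{R}^+\times\mathcal{PH}$.
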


As a consequence of \Cref{prop:reduction,prop:large-system}, 
it suffices to construct constants satisfying \eqref{eq:speed}--\eqref{eq:subsol det} to prove \Cref{thm:main}.

\section{Proof of \Cref{thm:main}} \label{sec:Proof}
In this section, we prove \Cref{thm:main} by finding constants satisfying the algebraic system \eqref{eq:speed}--\eqref{eq:subsol det}. 
To this end, let $\rho_0 > 0$ and $u_0 \neq 0$ be given, and assume that $p \in C^1(\mathbb{R}^+;\mathbb{R}^+)$ is strictly increasing.  
Motivated from the symmetry of the initial data, we construct an admissible fan subsolution exhibiting the same symmetry. 
In particular, we may assume that  
\begin{align} 
  (\rho_1,\bm{m}_1,\mathbb{U}_1,q_1,\bm{F}_1) &= (\rho_3,-\bm{m}_3,\mathbb{U}_3,q_3,-\bm{F}_3), \label{eq:symm} \\
  (\mu_0, \mu_1, \mu_2, \mu_3) &= (-b, -a, a, b) \label{eq:mu1-mu4}
\end{align}
for some constants $0<a<b$. 

The following lemma shows that, under the symmetry assumptions \eqref{eq:symm} and \eqref{eq:mu1-mu4},
the Rankine--Hugoniot conditions can be simplified to explicit algebraic relations,
which will be used throughout the subsequent analysis.

\begin{lem}[Reduction of the Rankine--Hugoniot conditions] \label{lem:Reduction}
  Under the symmetry assumptions \eqref{eq:symm} and \eqref{eq:mu1-mu4}, 
  the Rankine--Hugoniot conditions \eqref{eq:RH mass}--\eqref{eq:RH m2} is reduced to the following algebraic relations\/{\rm{:}}
  \begin{equation} \label{eq:Reduction}
    \lbrack \bm{m}_1 \rbrack_2 = -b(\rho_1-\rho_0), \quad
    \bm{m}_2 = \bm{0}, \quad
    \rho_2 = \rho_1 - \frac{b}{a}\bigl( \rho_1 - \rho_0 \bigr).
  \end{equation}
  Moreover, the matrices $\mathbb{U}_i$ can be expressed explicitly in terms of the parameters $(a,b,\rho_1,\lbrack\bm{m}_1\rbrack_1,q_1,q_2)$ as
  \begin{align}
    \lbrack \mathbb{U}_1 \rbrack_{11} &= -b^2(\rho_1-\rho_0) - p(\rho_0) + q_1, \label{eq:U1}\\
    \lbrack \mathbb{U}_1 \rbrack_{12} &= -b(\rho_0u_0 + \lbrack \bm{m}_1 \rbrack_1), \label{eq:U2}\\
    \lbrack \mathbb{U}_2 \rbrack_{11} &= -b(b-a)(\rho_1-\rho_0) - p(\rho_0) + q_2, \label{eq:U3}\\
    \lbrack \mathbb{U}_2 \rbrack_{12} &= -(b-a) \lbrack \bm{m}_1 \rbrack_1 - b \rho_0 u_0. \label{eq:U4}
  \end{align}
\end{lem}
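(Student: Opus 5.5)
The plan is to write out \eqref{eq:RH mass}--\eqref{eq:RH m2} at the four interfaces $i=0,1,2,3$, insert the symmetry ansatz \eqref{eq:symm}--\eqref{eq:mu1-mu4}, and solve the resulting linear relations. The only preliminary input is the explicit form of the outer states. From \eqref{eq:q_pm}--\eqref{eq:U_pm} with $\bm{m}_\pm=(\pm\rho_0u_0,0)$ one gets
\[
q_\pm=\tfrac12\rho_0u_0^2+p(\rho_0), \qquad \lbrack\mathbb{U}_\pm\rbrack_{11}=\tfrac12\rho_0u_0^2, \qquad \lbrack\mathbb{U}_\pm\rbrack_{12}=0 .
\]
In particular $-\lbrack\mathbb{U}_\pm\rbrack_{11}+q_\pm=p(\rho_0)$, and this is exactly the constant $-p(\rho_0)$ that appears in \eqref{eq:U1} and \eqref{eq:U3}.

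\textbf{Mass equation.} At $i=0$ we have $\mu_0=-b$, and \eqref{eq:RH mass} reads $-b(\rho_0-\rho_1)=0-\lbrack\bm{m}_1\rbrack_2$. This is the first relation in \eqref{eq:Reduction}. At $i=3$, using $\rho_3=\rho_1$ and $\bm{m}_3=-\bm{m}_1$, the condition reduces to the same identity, so it adds nothing. At the inner interfaces $i=1,2$ (speeds $\mp a$), \eqref{eq:RH mass} gives
\[
-a(\rho_1-\rho_2)=\lbrack\bm{m}_1\rbrack_2-\lbrack\bm{m}_2\rbrack_2, \qquad a(\rho_2-\rho_1)=\lbrack\bm{m}_2\rbrack_2+\lbrack\bm{m}_1\rbrack_2 .
\]
Subtracting the two forces $\lbrack\bm{m}_2\rbrack_2=0$. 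Either equation then gives $\rho_2=\rho_1+\lbrack\bm{m}_1\rbrack_2/a=\rho_1-\tfrac{b}{a}(\rho_1-\rho_0)$.

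\textbf{First momentum component.} Treat \eqref{eq:RH m1} at $i=1,2$ the same way, using $\mathbb{U}_3=\mathbb{U}_1$. Adding the two equations eliminates $\lbrack\mathbb{U}_1\rbrack_{12}-\lbrack\mathbb{U}_2\rbrack_{12}$ and yields $2a\lbrack\bm{m}_2\rbrack_1=0$, hence $\bm{m}_2=\bm{0}$ because $a>0$. The remaining equation at $i=1$ gives $\lbrack\mathbb{U}_2\rbrack_{12}=\lbrack\mathbb{U}_1\rbrack_{12}+a\lbrack\bm{m}_1\rbrack_1$. At $i=0$, \eqref{eq:RH m1} becomes $-b(-\rho_0u_0-\lbrack\bm{m}_1\rbrack_1)=-\lbrack\mathbb{U}_1\rbrack_{12}$, which is \eqref{eq:U2}. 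Substituting into the previous relation gives \eqref{eq:U4}. The case $i=3$ reproduces \eqref{eq:U2}, since $\lbrack\bm{m}_3\rbrack_1=-\lbrack\bm{m}_1\rbrack_1$.

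\textbf{Second momentum component.} At $i=0$, \eqref{eq:RH m2} reads $b\lbrack\bm{m}_1\rbrack_2=p(\rho_0)+\lbrack\mathbb{U}_1\rbrack_{11}-q_1$. Inserting $\lbrack\bm{m}_1\rbrack_2=-b(\rho_1-\rho_0)$ gives \eqref{eq:U1}. At $i=1$, with $\lbrack\bm{m}_2\rbrack_2=0$, we get $-a\lbrack\bm{m}_1\rbrack_2=-\lbrack\mathbb{U}_1\rbrack_{11}+q_1+\lbrack\mathbb{U}_2\rbrack_{11}-q_2$. Solving for $\lbrack\mathbb{U}_2\rbrack_{11}$ and using \eqref{eq:U1} produces $-b^2(\rho_1-\rho_0)+ab(\rho_1-\rho_0)-p(\rho_0)+q_2$, which is \eqref{eq:U3}.

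\textbf{Redundancy of the remaining interfaces.} What is left is to check that the conditions at $i=2,3$ add no new constraints. The point is that under \eqref{eq:symm}--\eqref{eq:mu1-mu4} the reflection $y\mapsto -y$ maps each interface relation onto its mirror:
\begin{itemize}
\item $\mu_i$ and $\lbrack\bm{m}\rbrack_2$ change sign;
\item $\lbrack\bm{m}\rbrack_1$ changes sign;
\item $\rho$, $q$ and $\mathbb{U}$ are unchanged;
\item the outer states are swapped.
\end{itemize}
I expect this to be a direct substitution. The main care needed is this sign bookkeeping, for instance confirming that \eqref{eq:RH m1} at $i=2$ is equivalent to the one at $i=1$ once $\mathbb{U}_3=\mathbb{U}_1$ and $\lbrack\bm{m}_3\rbrack_1=-\lbrack\bm{m}_1\rbrack_1$ are used. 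Beyond that there is no real obstacle: the proof is linear algebra.
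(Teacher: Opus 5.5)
Your proposal is correct and follows the paper's proof: the paper likewise writes out \eqref{eq:RH mass}--\eqref{eq:RH m2} at the four interfaces $y=\mp bt$, $y=\mp at$ under the symmetry ansatz and solves the resulting linear system. Your explicit elimination (adding and subtracting the inner-interface equations to get $\bm{m}_2=\bm{0}$, then reading off \eqref{eq:U1}--\eqref{eq:U4}) just spells out the step the paper summarizes as ``solving the above system''; one small remark is that the symmetry behind your redundancy check is really the point reflection $\bm{x}\mapsto-\bm{x}$ (flipping both components of $\bm{m}$ and leaving $\mathbb{U}$ invariant), and it maps state $2$ to itself only because you have already shown $\bm{m}_2=\bm{0}$ --- with that in hand, \eqref{eq:RH m2} at $i=2,3$ is directly the negative of \eqref{eq:RH m2} at $i=1,0$.
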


\begin{proof}
  Since we consider the Riemann initial data, 
  \begin{equation*}
    \begin{aligned}
      \rho_- &= \rho_0, & \rho_+ &= \rho_0, \\ 
      \bm{m}_- &= \begin{pmatrix} -\rho_0u_0 \\ 0 \end{pmatrix}, & 
      \bm{m}_+ &= \begin{pmatrix} \rho_0u_0 \\ 0 \end{pmatrix},  
    \end{aligned}
  \end{equation*}
  it follows from \eqref{eq:q_pm} and \eqref{eq:U_pm} that 
  \begin{equation*}
   q_\pm = \frac{1}{2}\rho_0u_0^2 + p(\rho_0), \quad
   \mathbb{U}_\pm = 
    \begin{pmatrix}
      \frac{1}{2}\rho_0u_0^2 & 0 \\
      0 & - \frac{1}{2}\rho_0u_0^2 \\
    \end{pmatrix}.
   \end{equation*} 
  Applying the Rankine--Hugoniot conditions \eqref{eq:RH mass}--\eqref{eq:RH m2} across each interface, we obtain
  \begin{itemize} 
  \item Rankine--Hugoniot conditions across the leftmost interface $y=-bt$:
    \begin{align*}
      -b(\rho_0-\rho_1) &= - \lbrack \bm{m}_1 \rbrack_2, \\ 
      -b(-\rho_0u_0 - \lbrack \bm{m}_1 \rbrack_1) &= - \lbrack \mathbb{U}_1 \rbrack_{12}, \\ 
      -b(0 - \lbrack \bm{m}_1 \rbrack_2) &= p(\rho_0) + \lbrack \mathbb{U}_1 \rbrack_{11} - q_1.
    \end{align*}
  \item Rankine--Hugoniot conditions across the interface $y=-at$:
    \begin{align*}
      -a(\rho_1-\rho_2) &= \lbrack \bm{m}_1 \rbrack_2 - \lbrack \bm{m}_2 \rbrack_2, \\
      -a(\lbrack \bm{m}_1 \rbrack_1 - \lbrack \bm{m}_2 \rbrack_1) &= \lbrack \mathbb{U}_1 \rbrack_{12} - \lbrack \mathbb{U}_2 \rbrack_{12}, \\
      -a(\lbrack \bm{m}_1 \rbrack_2 - \lbrack \bm{m}_2 \rbrack_2) &= - \lbrack \mathbb{U}_1 \rbrack_{11} + q_1 + \lbrack \mathbb{U}_2 \rbrack_{11} - q_2.
    \end{align*}
  \item Rankine--Hugoniot conditions across the interface $y=at$:
    \begin{align*}
      a(\rho_2-\rho_1) &= \lbrack \bm{m}_2 \rbrack_2 + \lbrack \bm{m}_1 \rbrack_2, \\
      a(\lbrack \bm{m}_2 \rbrack_1 + \lbrack \bm{m}_1 \rbrack_1) &= \lbrack \mathbb{U}_2 \rbrack_{12} - \lbrack \mathbb{U}_1 \rbrack_{12}, \\
      a(\lbrack \bm{m}_2 \rbrack_2 + \lbrack \bm{m}_1 \rbrack_2) &= - \lbrack \mathbb{U}_2 \rbrack_{11} + q_2 + \lbrack \mathbb{U}_1 \rbrack_{11} - q_1.
    \end{align*}
  \item Rankine--Hugoniot conditions across the rightmost interface $y=bt$:
    \begin{align*}
      b(\rho_1-\rho_0) &= - \lbrack \bm{m}_1 \rbrack_2, \\
      b(- \lbrack \bm{m}_1 \rbrack_1 - \rho_0u_0) &= \lbrack \mathbb{U}_1 \rbrack_{12}, \\
      b(- \lbrack \bm{m}_1 \rbrack_2 - 0) &= - \lbrack \mathbb{U}_1 \rbrack_{11} + q_1 - p(\rho_0).
    \end{align*}
  \end{itemize}
  Solving the above system of algebraic equations, we obtain 
  \begin{equation*} 
    \lbrack \bm{m}_1 \rbrack_2 = -b(\rho_1-\rho_0), \quad
    \bm{m}_2 = \bm{0}, \quad
    \rho_2 = \rho_1 - \frac{b}{a}\bigl( \rho_1 - \rho_0 \bigr).
  \end{equation*}
  Moreover, the matrices $\mathbb{U}_i$ can be written explicitly as \eqref{eq:U1}--\eqref{eq:U4}. 
  This completes the proof of \Cref{lem:Reduction}.
\end{proof}

\begin{rem} \label{rem:rho1}
  As mentioned in \Cref{prop:large-system}, it is necessary to verify $\rho_i>0$ for $i=1,2,3$. 
  Since we have $\rho_2=\rho_1 - \frac{b}{a}\bigl( \rho_1 - \rho_0 \bigr)$ and $\rho_3=\rho_1$, 
  a necessary condition for $\rho_i>0$ for $i=1,2,3$ is $\rho_1 \in (0,\frac{b}{b-a}\rho_0)$.
\end{rem}

Having reduced the Rankine--Hugoniot conditions to explicit algebraic relations, 
we now turn to the entropy inequalities. 
The following lemma shows that, for the choice of energy fluxes \eqref{eq:F1} and \eqref{eq:F2} below, 
the admissibility conditions can be reduced to a single algebraic inequality.

\begin{lem}[Entropy inequalities] \label{lem:Entropy}
  For $i=1,2$, let $\bm{F}_i$ be fluxes whose second components are defined by  
  \begin{align} 
    \lbrack \bm{F}_1 \rbrack_2 &:= -b \Bigl(q_1 + P(\rho_1) - p(\rho_1) - \frac{1}{2}\rho_0u_0^2 - P(\rho_0) \Bigr), \label{eq:F1}\\
    \lbrack \bm{F}_2 \rbrack_2 &:= 0. \label{eq:F2}
  \end{align}
  Then the entropy inequalities \eqref{eq:RH en} are satisfied across all interfaces for $i=0,1,2,3$, provided that 
  \begin{align}
    b \Bigl(q_1 + P(\rho_1) &- p(\rho_1) - \frac{1}{2}\rho_0u_0^2 - P(\rho_0) \Bigr) \notag \\
    &\leq a\Bigl(q_1 + P(\rho_1) - p(\rho_1) - q_2 - P(\rho_2) + p(\rho_2) \Bigr). \label{eq:en-nokori}
  \end{align}
\end{lem}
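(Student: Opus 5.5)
We verify \eqref{eq:RH en} interface by interface, using the symmetry \eqref{eq:symm}, \eqref{eq:mu1-mu4} and the explicit values of the outer states. By \eqref{eq:q_pm} and \eqref{eq:F_pm},
\[
q_\pm+P(\rho_\pm)-p(\rho_\pm)=\tfrac12\rho_0u_0^2+P(\rho_0),
\qquad
[\bm F_\pm]_2=\frac{q_\pm+P(\rho_0)}{\rho_0}[\bm m_\pm]_2=0,
\]
since $[\bm m_\pm]_2=0$. By \eqref{eq:symm} we also have $[\bm F_3]_2=-[\bm F_1]_2$, and the scalar quantities in region $3$ coincide with those in region $1$. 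To shorten the formulas, set
\[
E_1:=q_1+P(\rho_1)-p(\rho_1),\qquad E_2:=q_2+P(\rho_2)-p(\rho_2),\qquad E_0:=\tfrac12\rho_0u_0^2+P(\rho_0).
\]
With this notation, \eqref{eq:F1} reads $[\bm F_1]_2=-b(E_1-E_0)$.

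\emph{Outer interfaces.} At $i=0$ we have $\mu_0=-b$, and \eqref{eq:RH en} becomes
\[
-b(E_0-E_1)\le 0-[\bm F_1]_2=b(E_1-E_0).
\]
This holds with equality, which is exactly why $[\bm F_1]_2$ was chosen as in \eqref{eq:F1}. At $i=3$ we have $\mu_3=b$, and the condition reads
\[
b(E_1-E_0)\le [\bm F_3]_2-0=-[\bm F_1]_2=b(E_1-E_0),
\]
which again holds with equality. So the outer interfaces impose no restriction.

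\emph{Inner interfaces.} At $i=1$ we have $\mu_1=-a$, and the condition reads
\[
-a(E_1-E_2)\le [\bm F_1]_2-0=-b(E_1-E_0),
\]
which is equivalent to $b(E_1-E_0)\le a(E_1-E_2)$, i.e. to \eqref{eq:en-nokori}. At $i=2$ we have $\mu_2=a$, and the condition reads
\[
a(E_2-E_1)\le 0-[\bm F_3]_2=[\bm F_1]_2=-b(E_1-E_0),
\]
which is the same inequality once more. Hence all four conditions follow from \eqref{eq:en-nokori}, and this would complete the proof.

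The argument is pure bookkeeping; the only point needing care is the sign conventions. In particular, one must use $[\bm F_3]_2=-[\bm F_1]_2$ from \eqref{eq:symm} and read off the orientation of the differences in \eqref{eq:RH en} correctly, with left state minus right state.
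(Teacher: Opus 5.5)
Your proof is correct and matches the paper's argument. Both proofs use the symmetry, $[\bm F_\pm]_2=0$ and $[\bm F_3]_2=-[\bm F_1]_2$ to show that the outer interfaces hold with equality by the choice of $[\bm F_1]_2$. With $[\bm F_2]_2=0$, both inner-interface conditions then reduce to the single inequality \eqref{eq:en-nokori}.
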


Note that only the second component of the fluxes is relevant for the
algebraic system \eqref{eq:speed}--\eqref{eq:subsol det}. The first component can therefore be chosen
arbitrarily.

\begin{proof}
  It follows from \eqref{eq:F_pm} that
  \begin{equation*}
    \bm{F}_- = \Bigl(\tfrac{1}{2}\rho_0u_0^2 + p(\rho_0) + P(\rho_0) \Bigr) 
                 \begin{pmatrix} -u_0 \\ 0 \end{pmatrix}, \,
    \bm{F}_+ = \Bigl(\tfrac{1}{2}\rho_0u_0^2 + p(\rho_0) + P(\rho_0) \Bigr)
                 \begin{pmatrix} u_0 \\ 0 \end{pmatrix}.
  \end{equation*}
  By use of the symmetry assumptions \eqref{eq:symm} and \eqref{eq:mu1-mu4}, the entropy inequalities \eqref{eq:RH en} can be reduced to 
  \begin{align*}
    -b \Bigl(\frac{1}{2}\rho_0u_0^2 + P(\rho_0) - q_1 - P(\rho_1) + p(\rho_1) \Bigr) &\leq - \lbrack \bm{F}_1 \rbrack_2, \\
    -a(q_1 + P(\rho_1) - p(\rho_1) - q_2 - P(\rho_2) + p(\rho_2)) &\leq \lbrack \bm{F}_1 \rbrack_2 - \lbrack \bm{F}_2 \rbrack_2, \\
    a(q_2 + P(\rho_2) - p(\rho_2) - q_1 - P(\rho_1) + p(\rho_1)) &\leq \lbrack \bm{F}_2 \rbrack_2 + \lbrack \bm{F}_1 \rbrack_2, \\
    b \Bigl(q_1 + P(\rho_1) - p(\rho_1) - \frac{1}{2}\rho_0u_0^2 - P(\rho_0) \Bigr) &\leq - \lbrack \bm{F}_1 \rbrack_2.
  \end{align*}
  We choose $\lbrack \bm{F}_1 \rbrack_2$ so that the first and the last inequalities hold with equality. 
  More precisely, we set  
  \begin{equation*} 
    \lbrack \bm{F}_1 \rbrack_2 := -b \Bigl(q_1 + P(\rho_1) - p(\rho_1) - \frac{1}{2}\rho_0u_0^2 - P(\rho_0) \Bigr). 
  \end{equation*}
  With this choice, the second and the third inequalities imply that
  \begin{align}
    \lbrack \bm{F}_2 \rbrack_2 &\leq \phantom{-}a(q_1 + P(\rho_1) - p(\rho_1) - q_2 - P(\rho_2) + p(\rho_2)) + \lbrack \bm{F}_1 \rbrack_2, \label{eq:F-ue} \\
    \lbrack \bm{F}_2 \rbrack_2 &\geq -a(q_1 + P(\rho_1) - p(\rho_1) - q_2 - P(\rho_2) + p(\rho_2)) - \lbrack \bm{F}_1 \rbrack_2. \label{eq:F-shita}
    \end{align}
  Therefore, if we set
  \begin{equation*} 
    \lbrack \bm{F}_2 \rbrack_2 := 0, 
  \end{equation*}
  then the inequality \eqref{eq:F-ue} (now equivalent to \eqref{eq:F-shita}) is satisfied, provided that   
  \begin{align*}
    b \Bigl(q_1 + P(\rho_1) &- p(\rho_1) - \frac{1}{2}\rho_0u_0^2 - P(\rho_0) \Bigr) \\
    &\leq a\Bigl(q_1 + P(\rho_1) - p(\rho_1) - q_2 - P(\rho_2) + p(\rho_2) \Bigr). 
  \end{align*}
  This completes the proof of \Cref{lem:Entropy}.
\end{proof}

We are now in a position to verify the subsolution conditions. 
The following lemma shows that under the symmetry assumptions, 
the subsolution conditions \eqref{eq:subsol tr} and \eqref{eq:subsol det} are compatible with the entropy condition \eqref{eq:en-nokori}. 
Combined with \cref{prop:reduction,prop:large-system}, this yields the existence 
of infinitely many admissible weak solutions.

\begin{lem}[Subsolution conditions] \label{lem:Subsol}
  Under the symmetry assumptions \eqref{eq:symm} and \eqref{eq:mu1-mu4}, 
  there exist parameters $(a,b,\rho_1,\lbrack\bm{m}_1\rbrack_1,q_1,q_2)$ 
  satisfying the subsolution conditions \eqref{eq:subsol tr} and \eqref{eq:subsol det} together with \eqref{eq:en-nokori}. 
\end{lem}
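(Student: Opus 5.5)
We will work entirely within the reduced algebraic system given by \Cref{lem:Reduction} and \Cref{lem:Entropy}. We look for parameters in a perturbative regime around the trivial state, namely
\[
\rho_1=\rho_0+\varepsilon,\qquad \lbrack\bm{m}_1\rbrack_1=-\rho_0u_0+\sigma,\qquad q_1=\tfrac12\rho_0u_0^2+p(\rho_0)+\eta,\qquad q_2=p(\rho_2)+\theta,
\]
with $b>0$ fixed and $\varepsilon,\sigma,\eta,\theta,a$ small. Here $a$ is tied to $\varepsilon$, and a first guess is $a=\kappa\sqrt{\varepsilon}$. We also require $\varepsilon\ll a$, so that $\rho_2=\rho_0+\varepsilon-\tfrac{b}{a}\varepsilon\in(0,\rho_0)$ and \Cref{rem:rho1} is respected. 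Using \eqref{eq:Reduction} and \eqref{eq:U1}--\eqref{eq:U4}, we write each $2\times2$ matrix $A_i:=\frac{\bm{m}_i\otimes\bm{m}_i}{\rho_i}-\mathbb{U}_i+(p(\rho_i)-q_i)\mathbb{I}_2$ for $i=1,2$ explicitly. Conditions \eqref{eq:subsol tr}--\eqref{eq:subsol det} say exactly that $A_i$ is negative definite. By the symmetry \eqref{eq:symm}, the case $i=3$ follows from $i=1$.

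The key computation concerns the $(2,2)$ entries. For $A_1$ one finds
\[
\lbrack A_1\rbrack_{22}=p(\rho_1)-p(\rho_0)-b^2\rho_0\frac{\rho_1-\rho_0}{\rho_1}.
\]
This is negative of order $\varepsilon$ once $b^2>p'(\rho_0)$, which uses $p\in C^1$ and a Lax-type choice of $b$. For $A_2$ one finds
\[
\lbrack A_2\rbrack_{22}=p(\rho_2)-p(\rho_0)-b(b-a)(\rho_1-\rho_0).
\]
This is negative precisely because $\rho_2<\rho_0$ and $p$ is strictly increasing, and its size is roughly $p'(\rho_0)\tfrac{b}{a}\varepsilon\sim\sqrt{\varepsilon}$. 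This step is where monotonicity of $p$ enters, and it explains why $\rho_1=\rho_0$ cannot work: in that case both $(2,2)$ entries vanish. The off-diagonal entries also matter. The choice $\lbrack\bm{m}_1\rbrack_1\approx-\rho_0u_0$ makes $\lbrack A_1\rbrack_{12}=O(\varepsilon+\sigma)$, whereas $\lbrack A_2\rbrack_{12}=-a\rho_0u_0+O(\sigma)$ is of order $a$. Hence the determinant condition for $A_2$ reads roughly $2\theta\cdot\sqrt{\varepsilon}\gtrsim a^2\rho_0^2u_0^2\sim\varepsilon$, which can be met with $\theta\sim\sqrt{\varepsilon}$, i.e.\ small. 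Similarly, $A_1$ requires $\eta$ to exceed a quantity of order $\varepsilon$, namely $\lbrack A_1\rbrack_{12}^2/\lvert\lbrack A_1\rbrack_{22}\rvert$ plus the first-order change in $\frac{\lvert\bm{m}_1\rvert^2}{2\rho_1}+p(\rho_1)$.

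The main obstacle is the entropy inequality \eqref{eq:en-nokori}, which can be rewritten as an energy budget:
\[
(b-a)\bigl(q_1+P(\rho_1)-p(\rho_1)\bigr)+a\bigl(q_2+P(\rho_2)-p(\rho_2)\bigr)\le b\bigl(\tfrac12\rho_0u_0^2+P(\rho_0)\bigr).
\]
Since $q_1$ is forced to lie slightly above $\tfrac12\rho_0u_0^2+p(\rho_0)$, the leading terms cancel, and everything hinges on the signs of the next-order terms. We will expand both sides to first order in $\varepsilon$, $\eta$, $\theta$, $\sigma$, $a$, using $P'(\rho)=\bigl(P(\rho)+p(\rho)\bigr)/\rho$. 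The aim is to show that the gain coming from the $-a$ coefficient in front of $q_1$ dominates. That gain is $-a\bigl(\tfrac12\rho_0u_0^2+p(\rho_0)+P(\rho_0)\bigr)+a\bigl(P(\rho_2)+\theta\bigr)$, and since $a\,\tfrac12\rho_0u_0^2>0$ with $u_0\neq0$, it is of order $a\sim\sqrt{\varepsilon}$. It has to dominate the losses $b\eta$ and $b\bigl(P(\rho_1)-p(\rho_1)-P(\rho_0)+p(\rho_0)\bigr)$, which are of order $\varepsilon$, as well as $a\theta=O(\varepsilon)$. If the $\sqrt{\varepsilon}$ scaling does not close, the fallback is to keep the ratio $a/\varepsilon$ as a free large parameter and to choose $\sigma$ so that $\lbrack A_1\rbrack_{12}=0$ exactly. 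Finally, we fix $\varepsilon$ small enough that all strict inequalities hold simultaneously.
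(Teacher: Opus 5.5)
Your proposal is correct in its main line, and it shares the paper's skeleton: negative definiteness of $A_1,A_2$; $[A_1]_{12}=0$ via $\lbrack\bm{m}_1\rbrack_1=-\rho_1u_0$ (your $\sigma=-\varepsilon u_0$); $b^2>p'(\rho_0)$; $\rho_2<\rho_0$ for $A_2$; and an entropy margin coming from $\tfrac a2\rho_0u_0^2$. It does, however, run in a genuinely different scaling regime.

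The paper keeps its $\varepsilon=\rho_0-\rho_2$ and $q_2-p(\rho_2)$ of order one and sends only $a\to0$. Then every term of the entropy budget is of order $a$, and half of the kinetic gain is consumed by $q_2-p(\rho_2)$ through \eqref{eq:q2}. The remaining order-$a$ losses must be absorbed by the calibration \eqref{eq:epsilon}, which is fixed before $a$.

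With your coupling $a=\kappa\sqrt{\varepsilon}$, $\varepsilon=\rho_1-\rho_0$, one has $\rho_0-\rho_2\approx\tfrac{b}{\kappa}\sqrt{\varepsilon}$ and $\theta=O(\sqrt{\varepsilon})$. All losses are then $O(\varepsilon)=O(a^2)$: namely $(b-a)\eta$, $(b-a)\bigl(P(\rho_1)-P(\rho_0)-p(\rho_1)+p(\rho_0)\bigr)$, $a\bigl(P(\rho_2)-P(\rho_0)\bigr)$ and $a\theta$. These sit against the full gain $\tfrac a2\rho_0u_0^2$, so the $\sqrt{\varepsilon}$ scaling does close and your fallback is not needed.

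To turn the sketch into a proof, make two explicit choices:
\begin{itemize}
\item Take the paper's $q_1$ from \eqref{eq:q1} verbatim. It gives $[A_1]_{11}=-\varepsilon$ and, with $\sigma=-\varepsilon u_0$, $[A_1]_{12}=0$.
\item Take $2\theta=b(b-a)\varepsilon+p(\rho_0)-p(\rho_2)+M\sqrt{\varepsilon}$ with $M>4\kappa^3\rho_0^2u_0^2/(b\,p'(\rho_0))$. Then $[A_2]_{11}=-M\sqrt{\varepsilon}$. The determinant condition follows from $\lvert[A_2]_{22}\rvert\ge p(\rho_0)-p(\rho_2)\ge\tfrac{b\,p'(\rho_0)}{4\kappa}\sqrt{\varepsilon}$ for small $\varepsilon$, together with $[A_2]_{12}^2=\kappa^2\rho_0^2u_0^2\varepsilon\,(1+O(\sqrt{\varepsilon}))$.
\end{itemize}

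As for what each route buys: yours is a single-parameter limit with no calibration condition. It makes transparent that the entropy margin is exactly the kinetic energy released where $\bm{m}_2=\bm{0}$. In exchange it uses $p'(\rho_0)>0$ quantitatively. The paper, at the corresponding step \eqref{eq:a3}, only needs $p(\rho_0-\varepsilon)<p(\rho_0)$ for one fixed $\varepsilon$, and it delivers exact identities and fully explicit conditions \eqref{eq:epsilon}, \eqref{eq:a1}--\eqref{eq:a3}.

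There is one bookkeeping slip. In your budget form the gain is $-a\bigl(q_1+P(\rho_1)-p(\rho_1)\bigr)+a\bigl(P(\rho_2)+\theta\bigr)$. Since $q_1-p(\rho_1)=\tfrac12\rho_0u_0^2+O(\varepsilon)$, the term $-a\,p(\rho_0)$ in your formula is spurious: it cancels against $+a\,p(\rho_1)$. The true leading gain is only $-\tfrac a2\rho_0u_0^2$. That is all your argument actually uses, so nothing breaks.
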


\begin{proof}
  By substituting \eqref{eq:Reduction}--\eqref{eq:U4} into the subsolution conditions \eqref{eq:subsol tr} and \eqref{eq:subsol det}, 
  we simplify the conditions for each interface to   
  \begin{itemize}
   \item Subsolution conditions for $i=1,3$:
    \begin{align}
      &\frac{\lbrack \bm{m}_1 \rbrack_1^2 + b^2(\rho_1-\rho_0)^2}{\rho_1} + 2(p(\rho_1)-q_1) < 0, \label{eq:subsol tr1} \\
      &\Bigl(\frac{\lbrack \bm{m}_1 \rbrack_1^2}{\rho_1} + b^2(\rho_1-\rho_0) + p(\rho_0) + p(\rho_1) - 2q_1 \Bigr) \notag \\
      &\qquad \times 
      \Bigl(\frac{b^2(\rho_1-\rho_0)^2}{\rho_1} - b^2(\rho_1-\rho_0) - p(\rho_0) + p(\rho_1)  \Bigr) \notag \\
      &\qquad > \Bigl(-\frac{b (\rho_1-\rho_0) \lbrack \bm{m}_1 \rbrack_1}{\rho_1} + b(\rho_0u_0 + \lbrack \bm{m}_1 \rbrack_1) \Bigr)^2. \label{eq:subsol det1}
    \end{align}
   \item Subsolution conditions for $i=2$:
    \begin{align}
      &p\bigl(\rho_1 - \tfrac{b}{a}(\rho_1 - \rho_0)\bigr) -q_2 < 0, \label{eq:subsol tr2} \\
      &\Bigl[ b(b-a)(\rho_1-\rho_0) + p(\rho_0) + p\bigl(\rho_1 - \tfrac{b}{a}(\rho_1 - \rho_0)\bigr) - 2q_2 \Bigr] \notag \\
      &\qquad \times
      \Bigl[ -b(b-a)(\rho_1-\rho_0) - p(\rho_0) + p\bigl(\rho_1 - \tfrac{b}{a}(\rho_1 - \rho_0)\bigr) \Bigr] \notag \\
      &\qquad > \Bigl( (b-a) \lbrack \bm{m}_1 \rbrack_1 + b \rho_0 u_0 \Bigr) ^2. \label{eq:subsol det2}
    \end{align}
  \end{itemize}

  We show the existence of suitable parameters $(a,b,\rho_1,\lbrack\bm{m}_1\rbrack_1,q_1,q_2)$ which satisfy the desired conditions \eqref{eq:en-nokori} and \eqref{eq:subsol tr1}--\eqref{eq:subsol det2}.
  The proof proceeds by an explicit choice of the parameters. 
  A guiding principle is to choose some of the parameters for eliminating the right-hand side of 
  \eqref{eq:subsol det1} and then to choose the remaining parameters 
  so that each inequality is satisfied sequentially.

  We begin by fixing  
  \begin{equation}
    \lbrack \bm{m}_1 \rbrack_1 = - \rho_1 u_0, \label{eq:m1 kettei}
  \end{equation}
  which eliminates the right-hand side of \eqref{eq:subsol det1}. 
  Under this choice, conditions \eqref{eq:subsol tr1}--\eqref{eq:subsol det2} are reduced to  
  \begin{itemize}
   \item Subsolution conditions for $i=1,3$:
    \begin{align}
      &\rho_1u_0^2 + b^2 \frac{(\rho_1-\rho_0)^2}{\rho_1} + 2(p(\rho_1)-q_1) < 0, \label{eq:subsol-tr1-kai}\\
      &\Bigl(\rho_1u_0^2 + b^2(\rho_1-\rho_0) + p(\rho_0) + p(\rho_1) - 2q_1 \Bigr) \notag \\
      &\qquad \times
      \Bigl(-b^2(\rho_1-\rho_0)\,\frac{\rho_0}{\rho_1} - p(\rho_0) + p(\rho_1) \Bigr) > 0. \label{eq:subsol-det1-kai}
    \end{align}
   \item Subsolution conditions for $i=2$:
    \begin{align}
      &p\bigl(\rho_1 - \tfrac{b}{a}(\rho_1 - \rho_0) \bigr) -q_2 < 0, \label{eq:subsol-tr2-kai} \\
      &\Bigl[ b(b-a)(\rho_1-\rho_0) + p(\rho_0) + p\bigl( \rho_1 - \tfrac{b}{a}(\rho_1 - \rho_0) \bigr) - 2q_2 \Bigr] \notag \\
      &\qquad \times
       \Bigl[ -b(b-a)(\rho_1-\rho_0) - p(\rho_0) + p\bigl(\rho_1 - \tfrac{b}{a}(\rho_1 - \rho_0) \bigr) \Bigr] \notag \\
      &\qquad > \Bigl( b\rho_0 - (b-a)\rho_1 \Bigr)^2 u_0^2. \label{eq:subsol-det2-kai}
    \end{align}
  \end{itemize}
  We then choose $b>0$ as
  \begin{equation} \label{eq:b}
    b^2 := p'(\rho_0) + 1. 
  \end{equation}
  We next fix $\varepsilon \in (0,\rho_0)$ such that
  \begin{equation} \label{eq:epsilon}
    \varepsilon \, (u_0^2 + 2P'(\rho_0) + 3) + 2P(\rho_0-\varepsilon) - p(\rho_0-\varepsilon) - 2P(\rho_0) + p(\rho_0) < \frac{1}{2} \rho_0 u_0^2. 
  \end{equation}
  This is possible, since the inequality holds for $\varepsilon=0$ by $\rho_0 u_0^2 > 0$ and the left-hand side continuously depends on $\varepsilon$.
  Note that $\varepsilon$ depends only on the initial data $(\rho_0,u_0)$. 
  For this fixed $\varepsilon$, we define $q_2$ by
  \begin{equation} \label{eq:q2}
    2q_2 := p(\rho_0) + p(\rho_0 - \varepsilon) + \frac{1}{2} \rho_0 u_0^2. 
  \end{equation}
  Finally, we choose $a\in(0,b)$ sufficiently small so that  
  \begin{align}
    &\frac{2P(\rho_0+\frac{a\varepsilon}{b-a})-2P(\rho_0)}{\frac{a\varepsilon}{b-a}} - \frac{p(\rho_0+\frac{a\varepsilon}{b-a}) - p(\rho_0)}{\frac{a\varepsilon}{b-a}} \;<\; 2P'(\rho_0) - p'(\rho_0) + 1, \label{eq:a1} \\
    &\Bigl( \frac{p(\rho_0+\frac{a\varepsilon}{b-a})-p(\rho_0)}{\frac{a\varepsilon}{b-a}} \Bigr) \Bigl(1 + \frac{a\varepsilon}{(b-a)\rho_0} \Bigr) \;<\; p'(\rho_0) + 1, \label{eq:a2} \\
    &\Bigl( ab\varepsilon - \frac{1}{2} \rho_0 u_0^2 \Bigr) \Bigl( -ab\varepsilon -p(\rho_0) + p(\rho_0 - \varepsilon) \Bigr) \;>\; \Bigl(a u_0 (\rho_0 - \varepsilon)\Bigr)^2. \label{eq:a3} 
  \end{align}
  Then $a$ may depend only on $\varepsilon$ and $b$ as well as $(\rho_0,u_0)$. 
  We now define $\rho_1$ and $q_1$ in terms of $a,b,\varepsilon$ by
  \begin{align}
    \rho_1 &:= \rho_0 + \frac{a\varepsilon}{b-a}, \label{eq:rho1} \\
    2q_1 &:= \rho_1 u_0^2 + (\rho_1-\rho_0)(b^2+1) + p(\rho_0) + p(\rho_1). \label{eq:q1}
  \end{align}
  
  We verify that the parameters constructed above satisfy 
  \eqref{eq:en-nokori} and \eqref{eq:subsol-tr1-kai}--\eqref{eq:subsol-det2-kai}.
  Note that the choice $\varepsilon \in (0,\rho_0)$ ensures that $\rho_1$ defined in \eqref{eq:rho1} satisfies the condition $\rho_1 \in (0,\frac{b}{b-a}\rho_0)$ in \Cref{rem:rho1}.

  \textbf{Verification of \eqref{eq:en-nokori}.} 
    We apply \eqref{eq:rho1} and \eqref{eq:q1} to compute the left-hand side of \eqref{eq:en-nokori} as follows:
      \begin{align*}
        &b \Bigl(q_1 + P(\rho_1) - p(\rho_1) - \frac{1}{2} \rho_0 u_0^2 - P(\rho_0) \Bigr) \\
        &= \frac{b}{2} \, \biggl( \rho_1 u_0^2 + (\rho_1-\rho_0)(b^2+1) + p(\rho_0) + p(\rho_1) \\
        &\qquad\qquad + 2P(\rho_1) - 2p(\rho_1) - \rho_0 u_0^2 - 2P(\rho_0)\biggr) \\
        &= \frac{b}{2} \, (\rho_1-\rho_0) \, \Bigl(u_0^2 + b^2 + 1 + \frac{2P(\rho_1)-p(\rho_1)-2P(\rho_0)+p(\rho_0)}{\rho_1-\rho_0} \Bigr) \\
        &= \frac{ab\varepsilon}{2(b-a)} \, \Bigl(u_0^2 + b^2 + 1 + \frac{2P(\rho_1)-p(\rho_1)-2P(\rho_0)+p(\rho_0)}{\rho_1-\rho_0} \Bigr).
      \end{align*}
      We then compute the right-hand side of \eqref{eq:en-nokori}. 
      Note that it follows from \eqref{eq:Reduction} and \eqref{eq:rho1} that $\rho_2=\rho_0-\varepsilon$.
      By using \eqref{eq:q2} and \eqref{eq:q1}, the right-hand side of \eqref{eq:en-nokori} is given by
      \begin{align*}
        &a \Bigl(q_1 + P(\rho_1) - p(\rho_1) - q_2 - P(\rho_2) + p(\rho_2) \Bigr) \\
        &= \frac{a}{2} \,\biggl[ \rho_1 u_0^2 + (\rho_1-\rho_0)(b^2+1) + p(\rho_0) + p(\rho_1) + 2P(\rho_1) - 2p(\rho_1) \\
        &\qquad\qquad - p(\rho_0) - p(\rho_0 - \varepsilon) - \frac{1}{2} \rho_0 u_0^2 - 2P(\rho_0 - \varepsilon) + 2p(\rho_0 - \varepsilon) \biggr] \\
        &= \frac{a}{2} \,\biggl[ \frac{1}{2} \rho_0 u_0^2 + (\rho_1-\rho_0) (u_0^2 + b^2 + 1) \\
        &\qquad\qquad + 2P(\rho_1) - p(\rho_1) - 2P(\rho_0 - \varepsilon) + p(\rho_0 - \varepsilon) \biggr] \\
        &= \frac{a}{2} \,\biggl[ \frac{1}{2} \rho_0 u_0^2 + (\rho_1-\rho_0) \Bigl(u_0^2 + b^2 + 1 + \frac{2P(\rho_1) - p(\rho_1) - 2P(\rho_0) + p(\rho_0)}{\rho_1-\rho_0} \Bigr) \\
        &\qquad\qquad + 2P(\rho_0) - p(\rho_0) - 2P(\rho_0 - \varepsilon) + p(\rho_0 - \varepsilon) \biggr].
      \end{align*}
      Combining the above computations, we obtain    
      \begin{align*}
        &a \Bigl(q_1 + P(\rho_1) - p(\rho_1) - q_2 - P(\rho_2) + p(\rho_2) \Bigr) \\
        &\qquad\qquad - b \Bigl(q_1 + P(\rho_1) - p(\rho_1) - \frac{1}{2} \rho_0 u_0^2 - P(\rho_0) \Bigr) \\
        &=\; \frac{a}{2} \, \biggl[ \frac{1}{2} \rho_0 u_0^2 + 2P(\rho_0) - p(\rho_0) - 2P(\rho_0 - \varepsilon) + p(\rho_0 - \varepsilon) \\
        &\quad + (\rho_1-\rho_0 - \frac{b\varepsilon}{b-a}) \Bigl(u_0^2 + b^2 + 1 + \frac{2P(\rho_1) - p(\rho_1) - 2P(\rho_0) + p(\rho_0)}{\rho_1-\rho_0} \Bigr) \biggr] \\
        &=\; \frac{a}{2} \, \biggl[ \frac{1}{2} \rho_0 u_0^2 + 2P(\rho_0) - p(\rho_0) - 2P(\rho_0 - \varepsilon) + p(\rho_0 - \varepsilon) \\
        &\qquad\qquad - \varepsilon \, \Bigl(u_0^2 + b^2 + 1 + \frac{2P(\rho_1) - p(\rho_1) - 2P(\rho_0) + p(\rho_0)}{\rho_1-\rho_0} \Bigr) \biggr] 
        \end{align*} 
      Using \eqref{eq:b} and \eqref{eq:a1}, we obtain the following lower bound:
      \begin{align*}
        &\frac{a}{2} \, \biggl[ \frac{1}{2} \rho_0 u_0^2 + 2P(\rho_0) - p(\rho_0) - 2P(\rho_0 - \varepsilon) + p(\rho_0 - \varepsilon) \\
        &\qquad\qquad - \varepsilon \, \Bigl(u_0^2 + b^2 + 1 + \frac{2P(\rho_1) - p(\rho_1) - 2P(\rho_0) + p(\rho_0)}{\rho_1-\rho_0} \Bigr) \biggr] \\
        &>\; \frac{a}{2} \, \biggl[ \frac{1}{2} \rho_0 u_0^2 + 2P(\rho_0) - p(\rho_0) - 2P(\rho_0 - \varepsilon) + p(\rho_0 - \varepsilon) \\
        &\qquad\quad - \varepsilon \, \Bigl(u_0^2 + p'(\rho_0) + 2 + 2P'(\rho_0) -p'(\rho_0)  + 1 \Bigr) \biggr] \\
        &=\; \frac{a}{2} \, \biggl[ \frac{1}{2} \rho_0 u_0^2 + 2P(\rho_0) - p(\rho_0) - 2P(\rho_0 - \varepsilon) + p(\rho_0 - \varepsilon) \\
        &\qquad\quad - \varepsilon \bigl(u_0^2 + 2P'(\rho_0) + 3 \bigr) \biggr] \\
        &>\; 0,
      \end{align*} 
      where we have used \eqref{eq:epsilon} to derive the last inequality. 
      Therefore \eqref{eq:en-nokori} holds with strict inequality.
    
    \textbf{Verification of \eqref{eq:subsol-tr1-kai} and \eqref{eq:subsol-det1-kai}.} 
      A straightforward calculation shows that the sum of the two factors on the left-hand side of 
      \eqref{eq:subsol-det1-kai} coincides with the left-hand side of \eqref{eq:subsol-tr1-kai}. 
      Indeed, it holds that 
      \begin{align*}
        \Bigl(\rho_1 u_0^2 &+ b^2(\rho_1-\rho_0) + p(\rho_0) + p(\rho_1) - 2q_1 \Bigr) \\
        &+ \Bigl(-b^2(\rho_1-\rho_0)\,\frac{\rho_0}{\rho_1} - p(\rho_0) + p(\rho_1) \Bigr) \\
        &= \rho_1 u_0^2 + b^2 \frac{(\rho_1-\rho_0)^2}{\rho_1} + 2(p(\rho_1)-q_1). 
      \end{align*}
      Since \eqref{eq:subsol-tr1-kai} requires that the sum of the two factors is negative, while \eqref{eq:subsol-det1-kai} requires that the product is positive, 
      the two factors must have the same sign, and hence it suffices to verify that both factors on the left-hand side of \eqref{eq:subsol-det1-kai} are negative.  
      
      By \eqref{eq:rho1} and \eqref{eq:q1}, the first factor can be bounded from above as 
      \begin{align*}
        &\rho_1 u_0^2 + b^2(\rho_1-\rho_0) + p(\rho_0) + p(\rho_1) - 2q_1 \\
        &\quad = \rho_1 u_0^2 + b^2(\rho_1-\rho_0) + p(\rho_0) + p(\rho_1) \\
        &\qquad\quad - \Bigl(\rho_1 u_0^2 + (\rho_1-\rho_0)(b^2+1) + p(\rho_0) + p(\rho_1) \Bigr)\\
        &\quad = -(\rho_1-\rho_0) \\
        &\quad = -\,\frac{a\varepsilon}{b-a} \;<\; 0.
      \end{align*}
      Moreover, using \eqref{eq:b} and \eqref{eq:rho1}, one can estimate the second factor as 
      \begin{align*} 
        &-b^2(\rho_1-\rho_0)\,\frac{\rho_0}{\rho_1} - p(\rho_0) + p(\rho_1) \\
        &\quad = 
        -\,(\rho_1-\rho_0)\,\frac{\rho_0}{\rho_1}\,\Bigl( b^2 - \frac{p(\rho_1) - p(\rho_0)}{\rho_1-\rho_0} \frac{\rho_1}{\rho_0} \Bigr) \\
        &\quad =
        -\,\frac{a\varepsilon\rho_0}{(b-a)\rho_1}\,\biggl( p'(\rho_0) + 1 - \Bigl( \frac{p(\rho_0+\frac{a\varepsilon}{b-a})-p(\rho_0)}{\frac{a\varepsilon}{b-a}} \Bigr) \Bigl(1 + \frac{a\varepsilon}{(b-a)\rho_0} \Bigr) \biggr) \\
        &<0.
      \end{align*}
      Here the last inequality follows from \eqref{eq:a2}. 
      This shows that both factors on the left-hand side of \eqref{eq:subsol-det1-kai} 
      are negative, and hence \eqref{eq:subsol-tr1-kai} and \eqref{eq:subsol-det1-kai} hold.

    \textbf{Verification of \eqref{eq:subsol-tr2-kai}.} 
      By \eqref{eq:q2} and \eqref{eq:rho1}, the left-hand side of \eqref{eq:subsol-tr2-kai} is computed as 
      \begin{align*}
        &p\bigl(\rho_1 - \tfrac{b}{a}( \rho_1 - \rho_0 )\bigr) - q_2 \\
        &\quad =
        \frac{1}{2}\,\bigl(2p(\rho_0 - \varepsilon) - 2q_2 \bigr) \\
        &\quad =
        \frac{1}{2}\,\Bigl(2p(\rho_0 - \varepsilon) - p(\rho_0) - p(\rho_0 - \varepsilon) - \frac{1}{2} \rho_0 u_0^2 \Bigr) \\
        &\quad =
        \frac{1}{2}\,\Bigl(p(\rho_0 - \varepsilon) - p(\rho_0) - \frac{1}{2} \rho_0 u_0^2 \Bigr) 
        \;<\; 0.
      \end{align*}
      Here we have used $p'>0$ to derive the last inequality. 
      Therefore, \eqref{eq:subsol-tr2-kai} is satisfied.
    
    \textbf{Verification of \eqref{eq:subsol-det2-kai}.} 
      Using \eqref{eq:rho1}, one can simplify the right-hand side of \eqref{eq:subsol-det2-kai} as 
      \begin{equation*}
        \Bigl( b\rho_0 - (b-a)\rho_1 \Bigr)^2 u_0^2 
        \;=\;
        \Bigl(a u_0 (\rho_0 - \varepsilon)\Bigr)^2.
      \end{equation*}
      Moreover, by \eqref{eq:q2} and \eqref{eq:rho1}, the left-hand side of \eqref{eq:subsol-det2-kai} is reduced to 
      \begin{align*}
        &\Bigl( b(b-a)(\rho_1-\rho_0) + p(\rho_0) + p(\rho_0 - \varepsilon) - 2q_2 \Bigr) \\
        &\qquad \times
        \Bigl( -b(b-a)(\rho_1-\rho_0) - p(\rho_0) + p(\rho_0 - \varepsilon) \Bigr) \\
        &=
        \Bigl(ab\varepsilon + p(\rho_0) + p(\rho_0 - \varepsilon) - p(\rho_0) - p(\rho_0 - \varepsilon) - \frac{1}{2} \rho_0 u_0^2 \Bigr) \\
        &\qquad \times
        \Bigl(-ab\varepsilon - p(\rho_0) + p(\rho_0 - \varepsilon) \Bigr) \\
        &=
        \Bigl(ab\varepsilon - \frac{1}{2} \rho_0 u_0^2 \Bigr) 
        \Bigl(-ab\varepsilon - p(\rho_0) + p(\rho_0 - \varepsilon) \Bigr) \\
        &> 
        \Bigl(a u_0 (\rho_0 - \varepsilon)\Bigr)^2.
      \end{align*}
      The last inequality follows from \eqref{eq:a3}. 
      Since the last term coincides with the right-hand side of \eqref{eq:subsol-det2-kai}, inequality \eqref{eq:subsol-det2-kai} is now proved. 
  
    This completes the proof of \Cref{lem:Subsol}.
\end{proof}

By combining \Cref{lem:Subsol} with \Cref{prop:reduction,prop:large-system}, we obtain the existence of infinitely many admissible weak solutions to \eqref{eq:Euler mass}--\eqref{eq:Euler init2} with \eqref{eq:Riemann init}. 
The proof of \Cref{thm:main} is complete.

\begin{rem} \label{rem:5-2}
  As mentioned in \Cref{rem:5-1}, to prove \Cref{thm:main}, 
  a fan partition must consist of at least five regions. 
  We explain why this is the case. 
  Suppose to the contrary that there exists an admissible fan subsolution for a three-region partition, $\Gamma_-,\Gamma_1,\Gamma_+$. 
  Then from the Rankine--Hugoniot conditions \eqref{eq:RH mass} across the interfaces $y=\mu_0t$ and $y=\mu_1t$, 
  we obtain 
  \begin{align*}
    \mu_0(\rho_0-\rho_1) &= 0-\lbrack \bm{m}_1 \rbrack_2, \\
    \mu_1(\rho_1-\rho_0) &= \lbrack \bm{m}_1 \rbrack_2 - 0.
  \end{align*} 
  Since $\mu_0\neq\mu_1$, it follows that $\rho_1=\rho_0$ and $\lbrack \bm{m}_1 \rbrack_2 = 0$. 
  Moreover, from the Rankine--Hugoniot conditions \eqref{eq:RH m2} for $i=0,1$, we infer that  
  \begin{align*}
    \mu_0(0-\lbrack \bm{m}_1 \rbrack_2) &= p(\rho_0)+\lbrack \mathbb{U}_1 \rbrack_{11} - q_1, \\
    \mu_1(\lbrack \bm{m}_1 \rbrack_2 - 0) &= -\lbrack \mathbb{U}_1 \rbrack_{11} + q_1 -p(\rho_0).
  \end{align*}
  Substituting $\lbrack \bm{m}_1 \rbrack_2 = 0$ yields $\lbrack \mathbb{U}_1 \rbrack_{11}-q_1 = -p(\rho_0)$. 
  Then using the subsolution condition \eqref{eq:subsol det}, we obtain  
  \begin{equation*}
    \left(\frac{\lbrack \bm{m}_1 \rbrack_1^2}{\rho_0} - \lbrack \mathbb{U}_1 \rbrack_{11} + p(\rho_0) - q_1 \right) \Bigl( p(\rho_0) - p(\rho_0) \Bigr) > \Bigl(\lbrack \mathbb{U}_1 \rbrack _{12} \Bigr)^2. 
  \end{equation*}
  Since the left-hand side of the above inequality vanishes, this yields a contradiction. 
  Similarly, one can show that no admissible fan subsolution exists for a four-region partition. 
  For this reason, we employ the fan partition introduced in \Cref{def:fan-partition}. 
\end{rem}

\begin{rem}[Strategy for parameter selection] 
The purpose of this remark is to explain the guiding ideas behind the explicit
parameter choices used in the proof of \Cref{lem:Subsol}.
These considerations are purely heuristic and are included only to help the reader follow the algebraic choices.

As a first step, we define $\lbrack \bm{m}_1 \rbrack_1$ as in \eqref{eq:m1 kettei}, which eliminates the right-hand side of \eqref{eq:subsol det1}. 
With this choice, the subsolution conditions \eqref{eq:subsol tr1}--\eqref{eq:subsol det2} are reduced to \eqref{eq:subsol-tr1-kai}--\eqref{eq:subsol-det2-kai}. 

Next, we observe a structural relation between \eqref{eq:subsol-tr1-kai} and \eqref{eq:subsol-det1-kai}. 
In particular, the sum of the two factors on the left-hand side of \eqref{eq:subsol-det1-kai} coincides with the left-hand side of \eqref{eq:subsol-tr1-kai}. 
Hence, to verify \eqref{eq:subsol-tr1-kai} and \eqref{eq:subsol-det1-kai}, it suffices to check that both factors on the left-hand side of \eqref{eq:subsol-det1-kai} are negative, i.e.,  
\begin{align}
  \rho_1 u_0^2 + b^2(\rho_1-\rho_0) + p(\rho_0) + p(\rho_1) - 2q_1 \;&<\; 0, \label{eq:tr1 nokori} \\
  (\rho_1-\rho_0) \frac{\rho_0}{\rho_1} \Bigl( -b^2 + \frac{p(\rho_1) - p(\rho_0)}{\rho_1-\rho_0} \frac{\rho_1}{\rho_0} \Bigr) \;&<\; 0. \label{eq:det1 nokori}
\end{align}

Similarly, the sum of the two factors on the left-hand side of \eqref{eq:subsol-det2-kai} 
equals one half of the left-hand side of \eqref{eq:subsol-tr2-kai}. 
Consequently, both factors must be negative.
In particular, this requires that the second factor satisfies  
\begin{equation}
  -b(b-a)(\rho_1-\rho_0) - p(\rho_0) + p\bigl(\rho_1 - \tfrac{b}{a}(\rho_1 - \rho_0)\bigr) \; < \; 0. \label{eq:det2 kai}
\end{equation}
One can show that \eqref{eq:det2 kai} holds, provided that   
\begin{equation*}
\rho_0 < \rho_1 < \frac{b}{b-a}\rho_0.
\end{equation*}
Note that the upper bound on $\rho_1$ is imposed from \Cref{rem:rho1}. 
This motivates the definition  
\begin{equation}
  \rho_1 = \rho_0 + \frac{a}{b-a}\,\varepsilon, \label{eq:rho1 def}
\end{equation} 
where the constant $\varepsilon \in (0,\rho_0)$ will be specified later. 
In order to ensure \eqref{eq:tr1 nokori}, we define $q_1$ by
\begin{equation} 
  2q_1 = \rho_1 u_0^2 + b^2(\rho_1-\rho_0) + p(\rho_0) + p(\rho_1) + C_1(\rho_1-\rho_0), 
\end{equation}
where $C_1>0$ is a constant to be fixed later. 
Moreover, in order to verify \eqref{eq:det1 nokori}, it suffices to check that  
\begin{equation*}
  b^2 > \frac{p(\rho_1) - p(\rho_0)}{\rho_1-\rho_0} \frac{\rho_1}{\rho_0}.
\end{equation*}
Passing the limit as $a\to0$, i.e., $\rho_1 \to \rho_0$, in the above inequality yields $b^2 > p'(\rho_0)$.
Motivated from this observation, we set $b>0$ by
\begin{equation} \label{eq:b-def}
  b^2 = p'(\rho_0) + C_2
\end{equation}
for some constant $C_2>0$. 
Choosing $a>0$ sufficiently small, one can verify \eqref{eq:det1 nokori}.

We next turn to the conditions \eqref{eq:subsol-tr2-kai} and \eqref{eq:subsol-det2-kai}. 
Using \eqref{eq:rho1 def}, we deduce from \eqref{eq:subsol-tr2-kai} and \eqref{eq:subsol-det2-kai} that 
\begin{align}
  &q_2 > p(\rho_0 - \varepsilon), \label{eq:tr2 kai kai}\\
  &\Bigl(ab\varepsilon + p(\rho_0) + p(\rho_0 - \varepsilon) - 2q_2\Bigr) \Bigl(-ab\varepsilon - p(\rho_0) + p(\rho_0 - \varepsilon)\Bigr) \notag \\
  &\qquad > \Bigl(a u_0 (\rho_0 - \varepsilon)\Bigr)^2. \label{eq:det2 kai kai}
\end{align}
Heuristically, leting $a\to0$ in \eqref{eq:det2 kai kai}, we obtain  
\begin{equation*} 
  \Bigl(p(\rho_0) + p(\rho_0 - \varepsilon) - 2q_2\Bigr) \Bigl(- p(\rho_0) + p(\rho_0 - \varepsilon)\Bigr) > 0.
\end{equation*}
Since $p$ is increasing, the second factor is negative. 
This motivates defining $q_2$ by 
\begin{equation} \label{eq:q2 def}
  2q_2 = p(\rho_0) + p(\rho_0 - \varepsilon) + C_3 
\end{equation}
for some constant $C_3>0$.
For $a>0$ sufficiently small, one can then verify \eqref{eq:tr2 kai kai} and \eqref{eq:det2 kai kai}.  
  
Finally, we consider the entropy inequality \eqref{eq:en-nokori}. 
Using \eqref{eq:rho1 def}--\eqref{eq:b-def} and \eqref{eq:q2 def}, we deduce from \eqref{eq:en-nokori} that
\begin{align} 
  &(\rho_0 u_0^2 - C_3) + 2P(\rho_0) - p(\rho_0) - 2P(\rho_0 - \varepsilon) + p(\rho_0 - \varepsilon) \notag \\
  &- \varepsilon \Bigl(u_0^2 + p'(\rho_0) + C_1 + C_2 + \frac{2P(\rho_1) - 2P(\rho_0)}{\rho_1-\rho_0} - \frac{p(\rho_1) - p(\rho_0)}{\rho_1-\rho_0} \Bigr) \;\geq\; 0. \label{eq:en-heu}
\end{align}
Letting $a \to 0$, i.e., $\rho_1 \to \rho_0$, in \eqref{eq:en-heu}, we obtain
\begin{align*} 
  (\rho_0 u_0^2 &- C_3) - \varepsilon \bigl(u_0^2 + C_1 + C_2 + 2P'(\rho_0) \bigr) \\
  &+ 2P(\rho_0) - p(\rho_0) - 2P(\rho_0 - \varepsilon) + p(\rho_0 - \varepsilon) \;\geq\; 0.
\end{align*}
This suggests fixing the constants as follows: 
\begin{equation*}
  C_1 = C_2 = 1, \;\; C_3 = \frac{1}{2} \rho_0 u_0^2.
\end{equation*}
We then choose $\varepsilon>0$ sufficiently small such that
\begin{equation*}
  \frac{1}{2} \rho_0 u_0^2 - \varepsilon \bigl(u_0^2 + 2P'(\rho_0) + 2 \bigr) + 2P(\rho_0) - p(\rho_0) - 2P(\rho_0 - \varepsilon) + p(\rho_0 - \varepsilon) \;\geq\; 0.
\end{equation*}
This concludes the heuristic motivation for the parameter selection.
\end{rem}

\section*{Acknowledgements}
The author would like to thank his supervisor Professor Goro Akagi 
for his valuable comments and many helpful discussions.

\end{document}